 % !TeX spellcheck = en_US

 \documentclass[a4paper,12pt]{amsart}

\usepackage{graphicx}

\usepackage{amsfonts}
 \usepackage{amssymb}
 \usepackage{amsmath,amsxtra,amsthm}

\usepackage[utf8]{inputenc}

\usepackage[mathscr]{eucal}

 \usepackage{dsfont}

\usepackage{hyperref}

\usepackage{wrapfig}

\usepackage[all]{xy}

\usepackage{amssymb}
\usepackage{amsbsy}
 \usepackage{amsmath}
\usepackage{subfigure}
\usepackage{color}
\usepackage{enumitem}
\usepackage{easybmat}
\usepackage{xypic}
\usepackage{comment}
\usepackage{amsthm}
\theoremstyle{definition}
\newtheorem{Definition}{Definition}[section]
\newtheorem{Theorem}[Definition]{Theorem} 
\newtheorem{Corollary}[Definition]{Corollary}
\newtheorem{Lemma}[Definition]{Lemma}
\newtheorem{Remark}[Definition]{Remark}
\newtheorem{Proposition}[Definition]{Proposition}
\newtheorem{Example}[Definition]{Example}

\newcommand{\leaf}[0]{{N}}

\newcommand{\R}{\mathbb{R}}

\newcommand{\rd}{\mathrm{d}}

\newcommand{\leg}{\mathfrak{Leg}}

   %STRANGE \cL was defined, but I don't know what it did

\usepackage{MnSymbol,wasysym}

\begin{document}

\title[]{On Dirac structures admitting \\ a variational approach}

\author[O.Cosserat]{Oscar~Cosserat} 
\address{Oscar~Cosserat, LaSIE  -- CNRS \& University of La Rochelle,
Av. Michel Cr\'epeau, 17042 La Rochelle Cedex 1, France}
\email{oscar.cosserat@univ-lr.fr}

\author[A.Kotov]{Alexei~Kotov}
\address{Alexei~Kotov, Faculty of Science, University of Hradec Kralove, Rokitanskeho 62, Hradec Kralove 50003, Czech Republic}
\email{oleksii.kotov@uhk.cz}

\author[C.Laurent]{Camille~Laurent-Gengoux}
\address{Camille~Laurent-Gengoux, Institut Elie Cartan de Lorraine (IECL), UMR 7502 --  3 rue Augustin Fresnel, 57000 Technop\^ole Metz, France}
\email{camille.laurent-gengoux@univ-lorraine.fr}

\author[L.Ryvkin]{Leonid~Ryvkin}
\address{Leonid~Ryvkin, Georg-August-Universität Göttingen,
Institut für Mathematik, Bunsenstr. 3-5, 37073 Göttingen // Institut Camille Jordan, Université Claude Bernard Lyon 1, 
43 boulevard du 11 novembre 1918, 69622 Villeurbanne France}
\email{leonid.ryvkin@mathematik.uni-goettingen.de}

\author[V.Salnikov]{Vladimir~Salnikov} 
\address{Vladimir~Salnikov, LaSIE  -- CNRS \&  La Rochelle University,
Av. Michel Cr\'epeau, 17042 La Rochelle Cedex 1, France}
\email{vladimir.salnikov@univ-lr.fr}

\begin{abstract}
We discuss the notion of  {horizontal} cohomology for Dirac structures and, more generally, Lie algebroids. We then use this notion  {to describe the condition allowing} a variational formulation of Dirac dynamics.
\keywords{
}

\end{abstract}

\maketitle 
\tableofcontents

\section*{Introduction / motivation}

This paper is a part of series of works by the authors (in various combinations) concerning rather broad subjects of ``geometrizing mechanics'' and ``geometric integrators''. They include attempts to spell-out the underlying geometric structures for a large class of mechanical systems, like it was done decades ago with symplectic 
structures for Hamiltonian (conservative) systems. They also address the question of application of these geometric construction to design reliable simulation tools for respective classes of mechanical systems. 

The precise question we ask ourselves in this paper is mostly motivated by the results of \cite{RSHD}, where the appearance of (almost) \emph{Dirac structures} for mechanical systems with constraints is discussed. It can be vaguely formulated as: ``given a Dirac structure, what else do we need to know to define meaningful dynamics on it''. {The question is in the spirit of the paper \cite{morse}, where the notion of \emph{Dirac systems} is described in the context of constrains as well as for control theory; it also somehow complements the series of works \cite{dirac-algebroids, GG} on a uniform description, using algebroids, of constraint systems in both Hamiltonian and Lagrangian formalisms. All those works, like many others are inspired by the approach to mechanics using double vector bundles introduced in \cite{tulcz, tulcz2}. In this paper, more precisely, we study the cohomological  {conditions} for a system  arising from a Dirac structure to admit a variational (Lagrangian) formulation.}  {For convenience, by some language abuse, we will call them \emph{obstructions}, but what we actually mean is \emph{sufficient conditions}.  That is for ``good cases''}, when this obstruction is absent, we explain how a Lagrangian is constructed. This includes some classes of Poisson structures, for which a variational formulation of Hamiltonian mechanics becomes possible.  This is also an important step to the construction of Dirac-structure-preserving numerical methods, since having constructed the Lagrangian, under some assumptions, one can profit from the well-established machinery of variational numerical methods. 

The paper is organized as follows: 
We start by recalling some notions of Lie algebroids and give the definition of their cohomology. Then, we describe the main geometric tool -- cohomology of Dirac structures, providing some examples. In the second part we explain the relation of this Dirac cohomology and the obstructions to construct a variational formulation for the dynamics on the Dirac structures.
We illustrate the construction on some examples and non-examples.
To conclude, we explain some ideas about variational integrators and possible application of those in our setting -- this is a separate rich topic that we intend to elaborate in another more ``mechanically oriented'' paper.

\section{The  {horizontal} cohomology of Lie algebroids and Dirac structures}

\subsection{Lie algebroids}
The notion of Lie algebroids is a simultaneous generalization of tangent bundles and Lie algebras. In this subsection we briefly review the relevant notions. We refer to \cite{Mackenzie} for a detailed account.

\begin{Definition}
Let $M$ be a smooth manifold. A Lie algebroid $(A,\rho, [\cdot,\cdot])$ is given by a finite-dimensional vector bundle $A$, a vector bundle morphism $\rho:A\to TM$, called anchor and a ($\mathbb R$-bilinear) Lie bracket on the sections of $A$ $$[\cdot,\cdot]:\Gamma(A)\times \Gamma(A)\to \Gamma(A)$$ satisfying for all $f\in C^\infty(M)$, $s,s'\in \Gamma(A)$:
$$[s,fs']=f[s,s']+\rho(s)(f)\cdot s'.$$
\end{Definition}

It can be shown that the above condition implies that $\rho_*:\Gamma(A)\to \Gamma(TM)=\mathfrak X(M)$ is a Lie algebra homomorphism. Lie algebroids appear in many different settings:
\begin{itemize}
    \item The tangent bundle $TM$ with its usual bracket and $\rho=id$ is a Lie algebroid.
    \item Let $F\subset TM$ be an involutive subbundle, i.e. a foliation. Then $F$ is a Lie algebroid with the restricted bracket and the inclusion $F\to TM$ as anchor.
    \item Let $\mathfrak g$ be a Lie algebra and $v:\mathfrak g\to \mathfrak X(M)$ an infinitesimal action (i.e. a Lie algebra homomorphism). Then $\mathfrak g\times M$ is a Lie algebroid with bracket induced by the Lie bracket on $\mathfrak g$ and anchor $\rho(\xi, p)=v(\xi)(p)$. In particular Lie algebras can be seen as Lie algebroids over a point.
    \item Let $\pi \in \Gamma(\Lambda^2TM)$ be a Poisson bivector. Then the cotangent bundle carries a Lie algebroid structure induced by $\pi$. This is actually a particular instance of the Lie algebroid associated to a Dirac structure, which we will treat in the next subsection.
\end{itemize}

Lie algebroids can be alternatively defined as fiberwise linear Poisson structures on vector bundles or as differential graded manifolds of degree 1 (cf. e.g. \cite{vaintrob}). In particular, there is a degree 1 differential (the Lichnerowicz differential, \cite{lich}) $d_A:\Gamma(\Lambda^\bullet A^*)\to \Gamma(\Lambda^{{\bullet+1}} A^*)$,  {where $\bullet$ denotes an appropriate integer index. This differential is defined by} 
\begin{align*}
   (d_A\eta)(\xi_1,...,\xi_{n+1})=&\sum_{i}(-1)^{i+1}\rho(\xi_i)(\eta(\xi_1,...,\hat{\xi_i},..,\xi_{n+1}))\\+&\sum_{i<j}(-1)^{i+j}\eta([\xi_i,\xi_j],\xi_1,...,\hat{\xi_i},...,\hat{\xi_j},..,\xi_{n+1})
\end{align*}

The differential satisfies $d_A^2=0$ and induces a cohomology, which is called Lie algebroid cohomology and denoted by $H^\bullet(A)$. The anchor $\rho$ induces a morphism  {from the usual de Rham cohomology to it:}  $H_{dR}^\bullet(M)\to H^\bullet(A)$.\\

A Lie algebroid always induces a singular foliation on $M$: The subspace $\rho(A)\subset TM$ is always involutive and -- by construction -- locally finitely generated, hence the integrability theorem (cf. \cite{Hermann},   {reviewed in \cite{sylvain}}) applies and $M$ has a decomposition into immersed connected submanifolds $M=\bigsqcup_\alpha \leaf_\alpha$ such that $T\leaf_\alpha=\rho(A)|_{\leaf_\alpha}$ for all $\leaf_\alpha$. Moreover, the bracket on $A$ restricts to well-defined brackets on $A|_{\leaf_\alpha}$, turning $A|_{\leaf_\alpha}\to \leaf_\alpha$ into Lie algebroids.\\ 

The submanifolds $\leaf_\alpha$ are called leaves  (of the foliation induced by the Lie algebroid) and a Lie algebroid is called transitive, if it has only one leaf, i.e. $\rho(A)=TM$ and $M$ is connected.

\subsection{The  {horizontal} cohomology of Lie algebroid}

\begin{Definition}
Let $A\overset{\rho}{\to} TM$ be a Lie algebroid over the smooth manifold $M$. We define:
\begin{itemize}
    \item The subspace of $\rho$-horizontal forms at $m\in M$ as:
    $$(\Lambda^\bullet A_m^*)^{hor}:=\left\{
    \alpha \in \Lambda^\bullet A_m^*~ |~ \iota_v\alpha=0~\forall v\in \mathrm{ker}(\rho_m:A_m\to T_mM) 
    \right\}$$
    \item The subspaces of $\rho$-{horizontal} forms:
  $$  \Gamma(\Lambda^\bullet A^*)^{ {hor}}=\left\{
    \alpha \in  \Gamma(\Lambda^\bullet A^*)~ |~ \alpha_m\mathrm{~and~} (d_A\alpha)_m~\mathrm{~are~horizontal~for~all~}m
    \right\}
    $$
    \item the  {horizontal} cohomology of $A$ as the quotient
    
    $$\mathcal H^\bullet_{{hor}}(A)=\frac{\mathrm{ker}(d_A:  \Gamma(\Lambda^\bullet A^*)^{{hor}}\to  \Gamma(\Lambda^{{\bullet+1}} A^*)^{{hor}})}{\mathrm{Image}(d_A:  \Gamma(\Lambda^{{\bullet-1}} A^*)^{{hor}}\to  \Gamma(\Lambda^\bullet A^*)^{{hor}})}$$
\end{itemize}
\end{Definition}

\begin{Remark} Of course, there are natural maps $H_{dR}^\bullet(M)\to \mathcal H^\bullet_{{hor}}(A) $ and $\mathcal H^\bullet_{{hor}}(A)\to H^\bullet(A)$. In general, these maps are neither injective nor surjective, as we will see in the sequel.
\end{Remark}

\begin{Example}
When $A$ is a transitive Lie algebroid (i.e. $\rho(A)=TM$), then $H^\bullet_{{hor}}(A)$ is isomorphic to the usual de Rham cohomology $H_{dR}^\bullet(M)$. More generally, if $\rho(A)$ is a regular foliation (i.e. if $\rho$ has constant rank), then $H^\bullet_{{hor}}(A)$ recovers the longitudinal cohomology of the foliation induced by $\rho(A)$.
\end{Example}
The above example actually extends to the following:

\begin{Lemma}\label{lem:oidpot}
Let $A$ be a Lie algebroid and $\leaf\subset M$ a leaf of $A$ and $\eta\in \Gamma((\Lambda^kA^*)^{hor})$ a $\rho$-horizontal form. 
\begin{enumerate}
    \item $\eta|_\leaf$ is a $\rho$-horizontal $k$-form on the restricted Lie algebroid $A|_\leaf\to \leaf$, i.e. it induces a unique k-form $\eta_\leaf\in \Omega^k(\leaf)$. 
    \item $\eta$ is completely determined by the collection $\{\eta_\leaf ~|~\leaf\mathrm{~leaf~of~}A\}$.
    \item When $\eta$ is  {horizontal}, we have $(d_A\eta)_\leaf=d\eta_\leaf$.
    \item Let $[\eta]=0\in \mathcal H^k_{{hor}}(A)$, then $[\eta_\leaf]=0\in H^k_{dR}(\leaf)$ for all leaves $\leaf$ of the algebroid $A$.
\end{enumerate}
\end{Lemma}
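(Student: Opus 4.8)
The plan is to reduce all four statements to one structural fact: over a leaf $\leaf$ the restricted algebroid $A|_{\leaf}\to\leaf$ is transitive, so that its anchor $\rho\colon A|_{\leaf}\to T\leaf$ is a fibrewise surjective vector bundle morphism. First I would record three consequences that I will use repeatedly. (i) Fibrewise, $\rho_m\colon A_m\twoheadrightarrow T_m\leaf$ has kernel exactly $\ker(\rho_m\colon A_m\to T_mM)$, so the transpose identifies $\Lambda^kT_m^*\leaf$ with the horizontal subspace $(\Lambda^kA_m^*)^{hor}$; hence $\rho^*\colon\Omega^\bullet(\leaf)\to\Gamma(\Lambda^\bullet(A|_{\leaf})^*)$ is fibrewise injective with image exactly the horizontal forms. (ii) The anchor is a Lie algebroid morphism (this is essentially the statement, recalled above, that $\rho_*$ is a Lie algebra homomorphism together with the Leibniz rule), so $\rho^*$ intertwines the de Rham and Lichnerowicz differentials, $\rho^*\circ d=d_{A|_{\leaf}}\circ\rho^*$ — the cochain-level incarnation of the map $H_{dR}^\bullet\to H^\bullet(A)$. (iii) Since a leaf is a $\rho$-invariant submanifold and, as recalled in the text, the anchor and bracket of $A$ restrict to those of $A|_{\leaf}$, the defining formula for $d_A$ yields $(d_A\eta)|_{\leaf}=d_{A|_{\leaf}}(\eta|_{\leaf})$.

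Granting these, the proof is short. For (1): as $\eta$ is $\rho$-horizontal and $\ker(\rho|_{\leaf})_m=\ker\rho_m$, the restriction $\eta|_{\leaf}$ is horizontal for $A|_{\leaf}$, hence by (i) equals $\rho^*\eta_{\leaf}$ for a unique $\eta_{\leaf}\in\Omega^k(\leaf)$; smoothness of $\eta_{\leaf}$ I would get by pulling $\eta|_{\leaf}$ back along a local smooth section of the constant-rank surjection $\rho|_{\leaf}$, the result being section-independent precisely by horizontality. For (2): pointwise $\eta_m=\rho_m^*((\eta_{\leaf})_m)$ with $\rho_m^*$ injective, and every point of $M$ lies in a unique leaf, so the family $\{\eta_{\leaf}\}$ determines $\eta$ everywhere. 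For (3): if moreover $d_A\eta$ is horizontal, then both $\eta|_{\leaf}$ and $(d_A\eta)|_{\leaf}$ lie in the image of $\rho^*$, and combining (ii) and (iii), $\rho^*(d\eta_{\leaf})=d_{A|_{\leaf}}(\rho^*\eta_{\leaf})=(d_A\eta)|_{\leaf}=\rho^*((d_A\eta)_{\leaf})$; injectivity of $\rho^*$ gives $d\eta_{\leaf}=(d_A\eta)_{\leaf}$. For (4): if $[\eta]=0$ in $\mathcal{H}^k_{hor}(A)$, write $\eta=d_A\beta$ with $\beta\in\Gamma(\Lambda^{k-1}A^*)^{hor}$; then $\beta$ and $d_A\beta=\eta$ are both horizontal, so (3) applies to $\beta$ and gives $\eta_{\leaf}=(d_A\beta)_{\leaf}=d(\beta_{\leaf})$, whence $[\eta_{\leaf}]=0$ in $H^k_{dR}(\leaf)$.

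I expect the only genuinely delicate points to be the smoothness and well-definedness of $\eta_{\leaf}$ — because $\leaf$ is only immersed in $M$, this must be checked locally, using that $\rho|_{\leaf}$ has locally constant rank and admits local smooth sections — and the verification of (iii): one must check that $(d_A\eta)|_{\leaf}$, evaluated on sections of $A|_{\leaf}$ extended arbitrarily to sections of $A$ near $\leaf$, depends only on the restricted data, which is exactly where one uses that $\rho(\xi)$ is tangent to the leaf and that the bracket restricts. Everything else is fibrewise linear algebra together with the naturality of the Lichnerowicz differential under Lie algebroid morphisms.
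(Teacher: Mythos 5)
Your proof is correct. The paper states Lemma~\ref{lem:oidpot} without giving a proof, and your argument --- identifying horizontal forms over a leaf with pullbacks along the surjective anchor of the transitive restricted algebroid $A|_\leaf$, then using that the anchor is a Lie algebroid morphism (so $\rho^*$ intertwines $d$ and $d_{A|_\leaf}$) together with the fact that the bracket restricts to leaves (so $(d_A\eta)|_\leaf=d_{A|_\leaf}(\eta|_\leaf)$) --- is exactly the intended route, and you correctly isolate the only delicate points, namely the smoothness and well-definedness of $\eta_\leaf$ on the merely immersed leaf and the leaf-locality of the Koszul formula.
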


\begin{Remark}
For $A=F$ the Lie algebroid of a regular foliation,
$\mathcal H^\bullet_{{hor}}(A)$ is 
the longitudinal cohomology along the leaves of $A$, which must not be confused with the cohomology of the leaf space, or equivariant cohomology, that is the cohomology of forms which are basic with respect to the leaf space.
\end{Remark}

\begin{Remark}
In view of the above example and remark, let us stress 
that the usual intuition related to the ``{horizontal}'' as  {parallel to} the base should be applied very carefully since it is sometimes misleading.  {This newly defined cohomology also should not be confused with basic or equivariant cohomology of algebroids (cf. \cite{ginzburg, zucchini}).}
\end{Remark}

\subsection{Dirac structures}
In this subsection, we briefly review the notion of Dirac structures. For details we refer to \cite{zbMATH00004959}. \\

Let $M$ be a manifold. The standard Courant
algebroid (exact Courant algebroid with vanishing \v{S}evera class) on $M$ is given by $(\mathbb TM, \langle\cdot,\cdot\rangle, [\cdot,\cdot])$, where $\mathbb TM$ is, as a vector bundle $TM\oplus T^*M$, $\langle\cdot,\cdot\rangle:\mathbb TM\otimes \mathbb TM\to \mathbb R$ is the standard symmetric pairing $\langle(v,\alpha),(w,\beta)\rangle=\alpha(w)+\beta(v)$ and $[\cdot,\cdot]:\Gamma(\mathbb TM)\otimes \Gamma(\mathbb TM)\to \Gamma(\mathbb TM)$ is the Courant bracket:
$$
[(X,\alpha),(Y,\beta)]=([X,Y],L_X\beta-L_Y\alpha -\frac{1}{2} d(\beta(X)-\alpha(Y))).
$$
This bracket is skew-symmetric but does not satisfy the Jacobi identity. There is an alternative definition of bracket (the Dorfman bracket), which satisfies the Jacobi identity, but is not skew symmetric. We are now prepared to give the central definition of a Dirac structure:

\begin{Definition}
A $dim(M)$-dimensional 
subbundle $D\subset \mathbb TM$ is called Dirac structure, if it is isotropic (i.e. $\langle D,D\rangle=0$) and involutive (i.e. $[\Gamma(D),\Gamma(D)]\subset \Gamma(D)$ ). 
\end{Definition}

Let us look at some examples:
\begin{itemize}
    \item Let $\omega\in\Omega_{cl}^2(M)$ be a closed 2-form. Then its graph $\Gamma_\omega=\{(v,\iota_v\omega) ~|~v\in TM\}$ is a Dirac structure. Any Dirac structure with bijective anchor 
    $D\to TM$ (i.e. the restriction of the projection $\mathbb TM\to TM$ to $D$ is bijective) can be described by the graph of a closed 2-form.
    \item Let $\pi\in\Gamma(\Lambda^2TM)$ be a Poisson structure. Its graph $\Gamma_\pi=\{(\iota_\alpha\pi,\alpha) ~|~\alpha\in T^*M\}$ is a Dirac structure. Any Dirac structure with bijective projection $D\to T^*M$ can be described as a Poisson bivector.
    \item Let $F\subset TM$ be an involutive (regular) distribution and $F^\circ\subset T^*M$ its annihilator. Then $D=F\oplus F^\circ$ is a Dirac structure.
\end{itemize}

\begin{Remark}
The closedness of the 2-form $\omega$ is essential for $\Gamma_\omega$ to be involutive. However, for a non-closed 2-form, we can consider a twisted Courant algebroid $(\mathbb TM,\langle\cdot,\cdot\rangle, [\cdot,\cdot]_{d\omega})$ (with a twisting of the Courant bracket using $d\omega$) with respect to which $\Gamma_\omega$ is involutive, i.e. a (twisted) Dirac structure. In this article, we will only work with Dirac structures in the standard Courant algebroid.
\end{Remark}

Restricted to a Dirac structure $D$, the Courant bracket becomes a Lie bracket and turns $D$ into a Lie algebroid. We call the ({horizontal}) Lie algebroid cohomology of $D$ its ({horizontal}) Dirac cohomology. The Dirac structure also induces a canonical  {horizontal} 2-cocycle that we will now describe.

\subsection{The natural  {horizontal} two-cocycle of a Dirac structure}
Let $D\subset \mathbb  {TM}$ be a 
Dirac structure. We define $\omega_D\in\Gamma(\Lambda^2D^*)$ by $$\omega_D((v,\alpha),(w,\beta))=\alpha(w)-\beta(v).$$  
 As $D$ is isotropic, we have $\omega_D((v,\alpha),(w,\beta))=2\alpha(w)=-2\beta(v)$, i.e. $\omega_D$ is horizontal  {at each point}.
A computation based on the involutivity of $D$ (\cite{burs}), shows that $\omega_D$ is closed in Dirac cohomology, i.e. $d_D\omega_D=0$, and hence $\omega_D$ is  {horizontal}.\footnote{  
 {Note that from now on we write  {horizontal} as a shorthand for $\rho$-horizontal, since the anchor map is no longer explicitly used.}}
It thus yields a natural class in $\mathcal H^2_{{hor}}(D)$.
Hence, in view of Lemma \ref{lem:oidpot} we have:

\begin{Lemma}Let $D\subset \mathbb TM$ be a Dirac structure.
\begin{enumerate}
    \item There is a naturally induced  {horizontal} cocycle $\omega_D\in \Gamma(\Lambda^2D^*)^{{hor}}$ 
associated to any Dirac structure $D$.
\item If $[\omega_D]=0\in \mathcal{H}_{{hor}}^2(D)$, then for any leaf $\leaf$ of $D$, $[(\omega_D)_\leaf]=0\in H^2_{dR}(\leaf)$.
\end{enumerate}
\end{Lemma}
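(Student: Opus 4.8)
The plan is to dispatch the two assertions separately; both are short, given the discussion preceding the statement and Lemma~\ref{lem:oidpot}. Assertion~(1) has in effect already been established above, and I would only recap it. The anchor of the Lie algebroid $D$ is the projection $\rho\colon D\to TM$, $(v,\alpha)\mapsto v$, whose fibrewise kernel at $m$ consists of the pairs $(0,\alpha)\in D_m$. From the defining formula, $\omega_D\big((0,\alpha),(w,\beta)\big)=\alpha(w)$ for every $(w,\beta)\in D_m$, while the isotropy of $D$ forces $0=\big\langle(0,\alpha),(w,\beta)\big\rangle=\alpha(w)$; hence $\iota_{(0,\alpha)}\omega_D=0$, that is, $\omega_D$ is pointwise horizontal. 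The computation based on the involutivity of $D$ in \cite{burs} gives $d_D\omega_D=0$, which is trivially horizontal as well, so $\omega_D\in\Gamma(\Lambda^2D^*)^{hor}$; being $d_D$-closed, it represents a class in $\mathcal H^2_{hor}(D)$.

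For assertion~(2), I would simply invoke Lemma~\ref{lem:oidpot} with $A=D$, $\eta=\omega_D$ and $k=2$: its part~(4) is precisely the statement to be proved. Spelled out: $[\omega_D]=0\in\mathcal H^2_{hor}(D)$ means $\omega_D=d_D\theta$ for some horizontal $\theta\in\Gamma(\Lambda^1D^*)^{hor}$; fixing a leaf $\leaf$ of $D$, parts~(1) and~(3) of Lemma~\ref{lem:oidpot} yield a well-defined $\theta_\leaf\in\Omega^1(\leaf)$ with $(\omega_D)_\leaf=(d_D\theta)_\leaf=d\,\theta_\leaf$, so $(\omega_D)_\leaf$ is exact and $[(\omega_D)_\leaf]=0\in H^2_{dR}(\leaf)$.

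I do not expect a real obstacle. The only substantive ingredient, $d_D\omega_D=0$ from the involutivity of $D$, is the cited computation of \cite{burs}; the reduction of $\rho$-horizontal forms on $D$ to ordinary differential forms on the leaves, intertwining $d_D$ with the de Rham differential, is precisely Lemma~\ref{lem:oidpot}(1)--(3). The one thing to keep straight is the bookkeeping of the restriction-to-a-leaf map, and it may be worth recording that $(\omega_D)_\leaf$ is, up to normalization, the presymplectic form the Dirac structure induces on its leaf $\leaf$ --- the symplectic form of a symplectic leaf in the Poisson case $D=\Gamma_\pi$ --- so that the vanishing of $[\omega_D]$ in horizontal cohomology is the natural global strengthening of leafwise exactness.
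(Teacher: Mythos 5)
Your proposal is correct and follows essentially the same route as the paper: the paper establishes part~(1) by the same pointwise isotropy computation (noting $\omega_D((v,\alpha),(w,\beta))=2\alpha(w)=-2\beta(v)$) together with the closedness $d_D\omega_D=0$ cited from \cite{burs}, and deduces part~(2) directly from Lemma~\ref{lem:oidpot}. No gaps.
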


\begin{Remark}
The second statement above heavily relies on the fact that we work with the  {horizontal} cohomology: Even when a primitive of $\omega_D$ in $\Gamma(\Lambda^\bullet D^*)$ exists, it has no reason to be horizontal (i.e. to restrict to leaves) in general.
\end{Remark}

\begin{Remark} \label{almost-dirac}
The construction of the above 2-form works out, even when $D$ is an \emph{almost-Dirac structure}, i.e. an isotropic $dim(M)$-dimensional subbundle of $\mathbb TM$ (which might not be involutive). However, in this case $D$ can fail to be a Lie algebroid and hence there is no associated Lie algebroid cohomology to lie in.
\end{Remark}

\subsection{Examples} \label{sec:examples}
Let us interpret this class in the most important cases.

\begin{Example} \label{sympl-ex}
$D=\Gamma_{\omega}\subset \mathbb TM$ is the graph of a (pre-)symplectic structure $\omega$. Then the Lie algebroid structure on $D$ is isomorphic to the Lie algebroid $TM$. Hence, the  {horizontal} cohomology is canonically isomorphic to the de Rham cohomology ($\mathcal{H}_{{hor}}^\bullet(D)\cong H^\bullet(D)=H^\bullet_{dR}(M)$). The form $\omega_D$ corresponds to $\omega$ under this isomorphism.
\end{Example}

\begin{Example}
\label{reg-fol}
Let $D=F\times F^\circ\subset \mathbb TM$, where $F\subset TM$ is a regular foliation. The Lie algebroid structure on $D$ is the induced bracket on $F$ (the usual Lie bracket of vector fields) and the zero bracket on $F^\circ$.
Then $H^\bullet(D)\cong H^\bullet(F)\times \Lambda^\bullet (F^\circ)^*$ and $\mathcal H^\bullet_{{hor}}(D)\cong H^\bullet(F)$. The form $\omega_D$ is zero.
\end{Example}

\begin{Remark} \label{almost-holonomic}
If $F\subset TM$ is not involutive, then $D=F\times F^\circ$ is still an almost-Dirac structure (cf. Remark \ref{almost-dirac}). Even though there is no cohomology, the associated 2-form $\omega_D$ is still zero. 
\end{Remark}

\begin{Example}
Let $D\subset \mathbb TM$ be the graph of a Poisson structure $\pi$. Then the Lie algebroid $D$ is isomorphic to $T^*M$ and $H^\bullet(D)\cong H^\bullet_{\pi}(M)$ is known as the Poisson cohomology {(see for example \cite{dSw})}. The class of $\omega_D$ in $H^\bullet(D)$ corresponds to the class of $\pi$ in $H^2_{\pi}(M)$. The class of $\omega$ in the finer cohomology $\mathcal H_{bas}^\bullet(D)$ is zero if and only if $\pi\in \mathfrak X^2(M)$ admits a primitive $E\in \mathfrak X(M)$ {(a vector field $E$ satisfying $L_E \pi = \pi$)}, which is tangent to the Poisson structure,
i.e. is a section of $\rho(D)\subset TM$. 

For instance for $M=\mathbb R^2$, the Poisson structure $\pi=x^2\partial_x\wedge \partial_y$ admits such a primitive $E=x\partial_x$.

%For a Dirac structure $D$ which is the graph of a Poisson structure, 
Also, on each leaf $\leaf$ of $D$, $\omega_D$ restricts to a symplectic form (\cite{burs}). { And hence there are no exact symplectic structures on compact manifolds, this leads to the following obstruction to the existence of horizontal primitives of $\omega_D$: 
%\begin{Lemma}
for a Poisson structure with  vanishing cohomology class, the only compact leaves of its symplectic foliation are points. }
%\end{Lemma}

\end{Example}

\begin{Example}
{Here is a classical type of Poisson structures:}
let $\mathfrak g$ be a Lie algebra. Its dual $M=\mathfrak g^*$ carries a natural Poisson structure, whose leaves are the coadjoint orbits of $G\curvearrowright\mathfrak g^*$ (cf. e.g. \cite{zbMATH06054532}). 

In this case $[\pi]\in H^2_\pi(M)=H^2(D)$ is always zero: There exists a (linear) vector field $E$, such that $[\pi,E]=\pi$. However, $E$ can rarely be chosen to be tangent to the coadjoint orbits. For instance, when $\mathfrak g$ is compact and semi-simple, this can never occur.

\end{Example}

\section{A variational approach to exact Dirac structures}
\subsection{Dirac paths}

\begin{Theorem} \label{thm1}
Let $D \subset \mathbb TM$ be a Dirac structure over $M$, 
$H\in C^\infty(M)$ be a Hamiltonian function and $\gamma$ a path on $M.$ 

Assume that the {horizontal} 2-class $[\omega_D]$ vanishes, and let $\theta\in \Gamma(D^*)^{hor}$ be such that $d_D\theta=\omega_D$, then the following statements are equivalent:
\begin{itemize}
    \item[(i)] The path $\gamma$ is a Hamiltonian curve, i.e. $(\dot \gamma(t), dH_{\gamma(t)})\in D$ for all $t$. 
    \item[(ii)] 
    All Dirac paths $\zeta:I\to D$ over $\gamma$ (i.e. $\rho(\zeta)=\dot \gamma$) 
    are critical points among the Dirac paths with the same end points  
     of the following functional:
    \begin{equation}
    \zeta\mapsto \int_I\left(\theta_{\gamma(t)}(\zeta(t))+H(\gamma(t))\right)dt
    \label{eq:DiracFunctionnal}
    \end{equation}
     \item[(iii)] There exists a Dirac path $\zeta:I\to D$ over $\gamma$ and the latter is the critical point among Dirac paths with the same end points of the functional \eqref{eq:DiracFunctionnal}.
\end{itemize}
 
\end{Theorem}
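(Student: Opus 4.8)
The plan is to use the horizontality of $\theta$ to reduce \eqref{eq:DiracFunctionnal} to a classical first-order variational problem on a single leaf, solve that problem by the usual calculus of variations, and finally translate the resulting Euler--Lagrange equation back into condition (i).

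\emph{Descent to a leaf.} Since $\theta$ is horizontal, the value $\theta_{\gamma(t)}(\zeta(t))$ depends on $\zeta(t)$ only through $\rho(\zeta(t))=\dot\gamma(t)$; hence the functional \eqref{eq:DiracFunctionnal} depends only on the base path $\gamma$ and not on the Dirac path $\zeta$ over it. In particular ``being a critical Dirac path'' is a property of $\gamma$ alone, so (ii) and (iii) are equivalent as soon as a Dirac path over $\gamma$ exists; this happens precisely when $\dot\gamma$ is everywhere tangent to $\rho(D)$, i.e. when $\gamma$ lies in a single leaf $\leaf$ of $D$, which is automatic once (i) holds, since then $\dot\gamma=\rho(\dot\gamma,dH)$. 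For such $\gamma$, Lemma~\ref{lem:oidpot} (together with $d_D\theta=\omega_D$ and horizontality of $\theta$) shows that $\theta$ restricts to an ordinary $1$-form $\theta_\leaf\in\Omega^1(\leaf)$ with $d\theta_\leaf=(\omega_D)_\leaf=:\omega_\leaf$, and \eqref{eq:DiracFunctionnal} becomes the ordinary action $\gamma\mapsto\int_I\big(\theta_\leaf(\dot\gamma(t))+H(\gamma(t))\big)\,dt$ on curves in $\leaf$; moreover admissible variations of $\zeta$ through Dirac paths with fixed endpoints correspond exactly to variations of $\gamma$ inside $\leaf$ with fixed endpoints.

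\emph{Variational calculus on the leaf and translation.} The Lagrangian $\theta_\leaf(\dot\gamma)+H$ is affine in the velocities, so its Euler--Lagrange equation is first order: the acceleration terms cancel, the boundary term vanishes because the endpoints are fixed, and, using $d\theta_\leaf=\omega_\leaf$, one is left with the presymplectic Hamilton equation $\iota_{\dot\gamma}\omega_\leaf=d(H|_\leaf)$ along $\gamma$. Hence a Dirac path $\zeta$ over $\gamma$ is critical for \eqref{eq:DiracFunctionnal} iff $\gamma$ solves this equation on $\leaf$. It then remains to check the pointwise equivalence $\iota_{\dot\gamma(t)}\omega_\leaf=d(H|_\leaf)\iff(\dot\gamma(t),dH_{\gamma(t)})\in D_{\gamma(t)}$: unwinding the definition of $(\omega_D)_\leaf$, the left-hand side says $\omega_D\big((\dot\gamma,\alpha),(w,\beta)\big)=dH(w)$ for every $(w,\beta)\in D_{\gamma(t)}$ and any lift $(\dot\gamma,\alpha)\in D_{\gamma(t)}$ of $\dot\gamma$, and since $D$ is isotropic of half-dimension, i.e. $D=D^{\perp}$ for the Courant pairing, a short linear-algebra computation turns this into the assertion that $(\dot\gamma,dH)$ pairs to zero with all of $D_{\gamma(t)}$, that is $(\dot\gamma,dH)\in D_{\gamma(t)}$. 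Combining these steps gives (i)$\Leftrightarrow$``$\zeta$ critical'', and with the first step this yields (i)$\Leftrightarrow$(ii)$\Leftrightarrow$(iii).

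I expect the descent step to be the crux, and to be precisely the reason the \emph{horizontal} cohomology is the relevant invariant here: if $\theta$ were only a primitive of $\omega_D$ in ordinary Dirac cohomology, the functional would not descend to base paths, Lemma~\ref{lem:oidpot} would not apply, and the boundary terms in the variation would not vanish, so vanishing of $[\omega_D]$ in $H^2(D)$ alone would not suffice. A secondary point requiring care is the degenerate behaviour in the last step, where $\omega_\leaf$ is in general only presymplectic, together with the edge case in which no Dirac path over $\gamma$ exists, so that (ii) holds vacuously; with the existence of a lift built into the statement --- equivalently, restricting attention to curves $\gamma$ tangent to the leaves, which (i) forces in any case --- this causes no difficulty.
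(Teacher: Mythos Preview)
Your proposal is correct and follows essentially the same route as the paper: use horizontality of $\theta$ to see that the functional depends only on $\gamma$ (giving (ii)$\Leftrightarrow$(iii)), descend via Lemma~\ref{lem:oidpot} to a single leaf $\leaf$ where $\theta$ becomes an ordinary $1$-form with $d\theta_\leaf=\omega_\leaf$, and then do the classical first-order variational computation for the action $\int(\theta_\leaf(\dot\gamma)+H)$ on curves in $\leaf$ with fixed endpoints.

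Two small points of comparison. First, the paper does the variational step by an explicit integration-by-parts calculation rather than by invoking the Euler--Lagrange formalism for affine-in-velocity Lagrangians; the content is the same. Second, you make explicit the pointwise dictionary $\iota_{\dot\gamma}\omega_\leaf=d(H|_\leaf)\Leftrightarrow(\dot\gamma,dH)\in D$ via $D=D^{\perp}$, whereas the paper simply asserts that the leaf equation is the Hamiltonian condition; your version is slightly more complete here (modulo the usual factor-of-$2$ convention relating $(\omega_D)_\leaf$ to the standard leaf presymplectic form, which is harmless for the equivalence). Your observation about the vacuous-(ii) edge case when $\gamma$ is not tangent to $\rho(D)$ is well taken; the paper silently assumes $\gamma$ lies in a leaf, which is forced by (i) and is the natural reading of the statement.
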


\begin{proof}
As $\theta(\zeta)$ does not depend on the choice of a Dirac path $\zeta$ over $\gamma$, the equivalence of  (ii) and  (iii) is obvious. \\
%\CLG{add the WZ story}

For the equivalence between (i) and (iii), let us first note that we can restrict the functional \eqref{eq:DiracFunctionnal} to a fixed leaf $\leaf$ containing $\gamma$. On $\leaf$, $\omega_{\mathcal D}$ corresponds to a pre-symplectic form $\omega_\leaf$ and $\theta$ yields a potential of  $\omega_\leaf$, i.e. a $1$-form $\theta_\leaf \in \Omega^1(\leaf)$ such that $d \theta_\leaf = \omega_\leaf$.  
Thus, it suffices to prove the assertion of the theorem for an exact pre-symplectic form $\omega_\leaf ={d} \theta_\leaf $. That is to say, we want to prove, that $\omega_\leaf^\flat(\dot \gamma)=dH_\gamma$ if and only if $\gamma: I \to \leaf$ is a critical point of 
$$\int_I \big(\theta_\leaf|_{\gamma(t)} (\dot \gamma(t))+H(\gamma(t)) \big ) dt$$
among curves with fixed endpoints. For symplectic manifolds this is a classical result ~\cite{arnold}, we provide a proof (that follows the same lines) for the sake of completeness.

Take a small variation 

$$v : \begin{array}{lrcl}
I\times (-\epsilon,\epsilon) & \to & \leaf \\
(t,s) & \mapsto & v(t,s) \end{array}$$ of $\gamma$ with fixed endpoints and set $Y(t) = \frac{\partial}{\partial s}\big|_{s=0}v \in T_{\gamma(t)} \leaf $. 
We call $\gamma^* L_Y\theta_\leaf$ the smooth $\mathbb R $-valued function on $I$ defined by
$$ \gamma^* L_Y\theta_\leaf(t) = \frac{\partial}{ \partial t} \theta_\leaf |_{\gamma(t)} ( Y(t) ) + d \theta_\leaf (Y(t) , \dot \gamma(t) )  .  $$
(If $Y$ happens to be the restriction to $\gamma $ of a vector field $Y$ on $\leaf$, $\gamma^* L_Y\theta_\leaf(t)dt \in \Omega^1(I)$ is the pull-back through $\gamma: I \to \leaf $ of $L_Y  \theta_\leaf $, hence the notation.)

\begin{align*}
& \left. \frac{\partial}{\partial s}\right|_0 \int_{t \in I} \left( \theta_\leaf|_{v_s(t)} \left(\frac{\partial{ v_s}}{\partial t }(t)\right)+H(v_s(t)) \right)dt \\ 
& = \int_{t \in I} \left. \frac{\partial}{\partial s}\right|_0 \left( \theta_\leaf|_{v_s(t)} \left(\frac{\partial{ v_s}}{\partial t }(t)\right)+H(v_s(t)) \right)dt \\
&=  \int_{t \in I}  \left( \gamma^* L_Y\theta_\leaf(t)  +dH|_{\gamma(t)}  (Y(t)) \right)dt
\\ &= \int_{t \in I}  \left(  d \theta_\leaf (Y(t) , \dot \gamma(t) )  +dH|_{\gamma(t)}  (Y(t)) \right)dt  \\
 \end{align*}
In the two last equalities, we used integration by parts and the fact that $Y(0)=0$ and $Y(1)=0$ since the endpoints of $v$ are fixed, so that $ \int_I  \frac{\partial}{ \partial t} \theta_\leaf |_{\gamma(t)} ( Y(t) ) dt =0$.
Since $Y(t)$ is arbitrary except at end points, this proves the result.
\end{proof}

\begin{Remark}
In the above proof, we actually do not require the equality $d\theta=\omega$ and the horizontality of $\theta$ globally. We only need both properties along $L$. 
\end{Remark}

The above enables us to treat a much wider class of Dirac structures than in section \ref{sec:examples}, especially singular ones.

\begin{Example}
Consider $\mathbb R^4$ with the Poisson structure \\$\pi=(x^2+y^2)\partial_x\wedge \partial_y+ \partial_z\wedge \partial_w$. Along the singular leaf $L=\{x=y=0\}$ the vector field $w\partial_w$ is horizontal and a primitive for $\pi$, hence the above remark applies and we obtain a variational characterization of paths. 
\end{Example}

\iffalse
XXXXX
\begin{align*}
& \left. \frac{\partial}{\partial s}\right|_0 \int_{t \in I} \left( \theta_\leaf|_{v_s(t)} \left(\frac{\partial{ v_s}}{\partial t }(t)\right)+H(v_s(t)) \right)dt \\ &= \int \left. \frac{\partial}{\partial s}\right|_0(v^*\theta_\leaf +v^*H )dt\\
&= \int (\gamma^*L_Y\theta_\leaf +\gamma^*L_YH)dt\\ &= \int (\gamma^*\iota_Yd\theta_\leaf +\gamma^*d\iota_Y\theta_\leaf   +\gamma^*\iota_Y dH) dt\\
&= \int d\theta_\leaf(Y,\dot\gamma)+dH(Y) dt. \end{align*}
In the last equality, we used the fact that $Y(0)=0$ and $Y(1)=0$ since the endpoints of $v$ are fixed.
The final expression vanishes for all infinitesimal variations $Y$ if and only if $\iota_{\dot \gamma}d\theta_\leaf=dH$. 
\fi

\subsection{Generalized implicit Lagrangian systems.}
\label{ILS}

Let $L\colon TQ \to \R$ a (possibly degenerate) Lagrangian.

{
\begin{Remark}
Note that in this section and in the next one the base manifold will be systematically $Q$ instead of $M$ used before -- this is to stress the fact that in the Lagrangian picture it is necessary to lift the construction to double vector bundles, and thus the relevant Dirac structures will be over $Q$ or $T^*Q$ depending on the context. The general facts about Dirac structures will still be formulated with the base manifold denoted by $M$.
\end{Remark}
}

\begin{Definition} We call Tulczyjew's differential the map $u\mapsto {\mathcal D}_uL := \beta (d_uL)$, where $\beta:T^*TQ\to T^*T^*Q$ is the Tulczyjew isomorphism (cf. Appendix \ref{tul-iso}). Its image is a submanifold of $T^*T^*Q$.
\end{Definition}

\begin{Definition}
We also define a map from $TQ$ to $T^* Q $ by $\mathbb{F}L(v)$ for every $v \in T_q Q$ by
 $$ \frac{\partial}{\partial t}{\Big|_{t=0}} L( v + tw) = \langle \mathbb{F}L(v) , w \rangle $$

\begin{enumerate}
    \item[a)] We denote by $\leg =\mathbb FL(TQ) \subset T^*Q$ the 
image of $\mathbb{F}L$.
    \item[b)] We call partial vector fields on $\leg$ sections\footnote{For $E$ a vector bundle over a manifold $X$ and $Y \subset X$ an arbitrary subset (not necessarily a manifold), we denote by $\Gamma(E)|_Y  $ 
    restrictions to $Y$ of smooth sections of $E$ in a neighborhood of $Y$ in $X$.} of $\Gamma(T(T^*Q))|_{\leg} $.
    \item[c)] 
An integral curve of a partial vector field $X$ on $\leg$   is a path $t \mapsto u_t \in TQ$ such that
 $$ \frac{{\mathrm d} }{{\mathrm d}t} \mathbb{F}L(u_t) =  X_{ \mathbb{F}L(u_t)}  .$$
 \item[d)] An implicit Lagrangian system for an almost Dirac structure  $\mathbb{D} \subset \mathbb{T}T^*Q$ is a pair $(X, L)$, with $X$ a partially defined vector field on $\leg $, such that   $(X(\mathbb FL(u)),  \mathcal D_uL)\in {\mathbb D}$ for all $u$ in $TQ$.
\end{enumerate} 
\end{Definition}

\begin{Remark}
We do not assume partial vector fields on $ \leg$ to be tangent to $\leg$ in any sense.  Of course, if they are not tangent, they may have little integral curves.
\end{Remark}

\begin{Remark}
Particular cases include:   
\begin{itemize}
    \item Usual Hamiltonian dynamics. When $\mathbb FL$ is a diffeomorphism and $\mathbb D$ is the graph of the canonical symplectic form on $T^*Q$, implicit Lagrangian systems are pairs $(X_H,L)$, where $X_H$ is the Hamiltonian vector field of {$H$, the Hamiltonian function associated to $L$ via the Legendre transform.}
    %the Legendre transform $H$ of $L$.
\item Constraint dynamics, which actually motivated the construction, we give some details below: Example \ref{holonomic}. 
\end{itemize}

\end{Remark}

\subsection{Implicit Lagrangian systems with magnetic terms}
\label{magnetic}

 \begin{Definition}[\cite{burs}] Let $D \subset \mathbb T M$ be a subbundle.
 \begin{enumerate}
     \item For all $\phi\colon M' \to M $, we denote by  $\phi^!D$ the set 
     $$\phi^! D_{m'}  := \left\{ (X,\phi^* \beta)    \hbox{ with }  X \in T_{m'}M', \beta \in T_{\phi(m')}^*  M\hbox{ s.t.} (\phi_*(X), \beta) \in D_{\phi(m')}  \right\} $$
     When $D$ is an (almost-)Dirac structure we call $\phi^!D$ the pullback of $D$.
     \item Let $\omega$ be a $2$-form $\omega\in \Omega^2(M)$, we denote by $e^\omega D$ the set 
     $$e^\omega D=\{(v,\beta +\iota_v\omega)~|~ (v,\beta)\in D  \}$$
     and call it the gauge transform of $D$. 
 \end{enumerate}
\end{Definition}

\begin{Lemma}[cf. e.g. \cite{burs}] Let $D \subset \mathbb TM$ be a Dirac structure and $M'$ be a manifold. 
\begin{enumerate}
    \item For any smooth map $\phi\colon M' \to M $, $\phi^!D$ is a Dirac structure on $M'$.
    \item For any closed 2-form $\omega \in \Omega^2(M)$, $e^\omega D$ is a Dirac structure on $M$.
\end{enumerate}
\end{Lemma}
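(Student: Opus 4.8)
The plan is to establish the two assertions separately, since they are essentially independent, and to reduce both to the defining conditions of a Dirac structure, namely that the subbundle is maximally isotropic (dimension $\dim M$, isotropic for $\langle\cdot,\cdot\rangle$) and involutive for the Courant bracket. Throughout I would work fiberwise for the algebraic properties (dimension, isotropy) and only invoke sections for involutivity.

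For part (1), the pullback $\phi^! D$: First I would check that $\phi^! D_{m'}$ is a linear subspace of $\mathbb{T}_{m'} M' = T_{m'} M' \oplus T^*_{m'} M'$ of the correct dimension. The map sending $(X, \beta) \in \phi^! D_{m'}$ to $(\phi_* X, \beta)$ has image inside $D_{\phi(m')}$ and kernel $\ker(\phi_*) \oplus 0$; a dimension count using $\dim D_{\phi(m')} = \dim M$ together with the fact that $\beta$ ranges over an appropriate subspace of $T^*_{\phi(m')} M$ pulled back via $\phi^*$ should give $\dim \phi^! D_{m'} = \dim M'$. Next, isotropy: for $(X, \phi^*\beta), (X', \phi^*\beta')$ in $\phi^! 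D_{m'}$ one computes $\langle (X,\phi^*\beta),(X',\phi^*\beta')\rangle = (\phi^*\beta)(X') + (\phi^*\beta')(X) = \beta(\phi_* X') + \beta'(\phi_* X) = \langle(\phi_* X,\beta),(\phi_* X',\beta')\rangle = 0$ since $D$ is isotropic. Finally, involutivity: given two sections of $\phi^! D$, one shows that near each point they can be written in terms of sections of $D$ composed with $\phi$ (possibly $\phi$-related vector fields and pulled-back one-forms), and then the naturality of the Courant bracket under $\phi$ — Lie brackets of $\phi$-related vector fields are $\phi$-related, and the Lie derivative / exterior derivative terms pull back compatibly — yields that the bracket again lies in $\Gamma(\phi^! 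D)$. This is the step I expect to be the main obstacle, because the Courant bracket involves Lie derivatives $L_X \beta$ of one-forms, and pull-back of one-forms is contravariant while push-forward of vector fields is only defined up to $\phi$-relatedness; making the section-level argument precise (choosing local frames, handling the case where $\phi$ is not a submersion or immersion) requires care, and this is presumably where the cited reference \cite{burs} is really needed.

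For part (2), the gauge transform $e^\omega D$ with $\omega$ closed: The subspace $e^\omega D_m = \{(v, \beta + \iota_v \omega) : (v,\beta) \in D_m\}$ is the image of $D_m$ under the linear isomorphism $(v,\beta) \mapsto (v, \beta + \iota_v \omega)$ of $\mathbb{T}_m M$, so it is automatically a subbundle of dimension $\dim M$. For isotropy, compute $\langle (v, \beta + \iota_v\omega), (w, \gamma + \iota_w \omega)\rangle = (\beta + \iota_v\omega)(w) + (\gamma + \iota_w\omega)(v) = \beta(w) + \gamma(v) + \omega(v,w) + \omega(w,v) = \langle(v,\beta),(w,\gamma)\rangle + 0 = 0$, using the skew-symmetry of $\omega$ and isotropy of $D$. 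For involutivity, I would take sections $(v_i, \beta_i + \iota_{v_i}\omega)$ of $e^\omega D$ with $(v_i, \beta_i)$ sections of $D$, expand the Courant bracket, and show the $\omega$-correction terms reassemble into $\iota_{[v_1,v_2]}\omega$ plus a term proportional to $d\omega$; since $d\omega = 0$, the remainder is exactly the gauge transform of $[(v_1,\beta_1),(v_2,\beta_2)] \in \Gamma(D)$. The key identity here is the Cartan-type formula $L_{v_1}\iota_{v_2}\omega - L_{v_2}\iota_{v_1}\omega - \tfrac12 d\big((\iota_{v_2}\omega)(v_1) - (\iota_{v_1}\omega)(v_2)\big) = \iota_{[v_1,v_2]}\omega + \iota_{v_2}\iota_{v_1} d\omega$, which I would verify by a direct computation (or cite), and this is the one genuinely computational point. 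The closedness hypothesis enters precisely to kill the $d\omega$ term, which parallels the Remark in the excerpt that closedness of $\omega$ is what makes $\Gamma_\omega$ involutive.

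Finally I would remark that both constructions are well known in the Dirac-geometry literature and refer to \cite{burs} (and \cite{zbMATH00004959}) for the details of the section-level arguments, so that in the paper itself only the statements and the brief fiberwise checks need to be recorded.
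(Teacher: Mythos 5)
The paper gives no argument for this lemma at all --- it is stated with the citation ``cf.\ e.g.\ \cite{burs}'' and used later only for the projection $\pi\colon T^*Q\to Q$ --- so there is no in-paper proof to compare against; your sketch is the standard one from the literature. Part (2) of your proposal is correct and complete in outline: $e^\omega$ is a pointwise isometry of the pairing, and your Cartan-type identity is right (expanding $L_{v_1}\iota_{v_2}\omega$ via $\iota_{[v_1,v_2]}\omega+\iota_{v_2}L_{v_1}\omega$ and $L_{v}=d\iota_v+\iota_v d$ gives exactly $\iota_{[v_1,v_2]}\omega+\iota_{v_2}\iota_{v_1}d\omega$ on the left-hand side), so closedness of $\omega$ kills the only non-gauge term. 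The pointwise parts of (1) are also fine: the backward image of a Lagrangian subspace under the linear relation induced by $d\phi_{m'}$ is Lagrangian, hence always of dimension $\dim M'$, and your isotropy computation is correct (though your ``map'' $(X,\phi^*\beta)\mapsto(\phi_*X,\beta)$ is only a relation, since $\beta$ is not determined by $\phi^*\beta$ unless $\phi$ is a submersion; the clean way is to compose the two linear relations and count).

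The genuine gap is in the smoothness of $\phi^!D$ for part (1), and it is not merely the technical nuisance you flag: the statement is false for arbitrary smooth maps. Although each fiber $\phi^!D_{m'}$ is Lagrangian, the family need not vary continuously. For example, take $D=\Gamma_\pi\subset\mathbb{T}\mathbb{R}^2$ with $\pi=x\,\partial_x\wedge\partial_y$ and $\phi\colon\mathbb{R}\to\mathbb{R}^2$, $t\mapsto(t,0)$; then $\phi^!D_t=T_t\mathbb{R}\oplus\{0\}$ for $t\neq0$ but $\phi^!D_0=\{0\}\oplus T_0^*\mathbb{R}$, so $\phi^!D$ is not a subbundle. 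One must therefore either add a constant-rank (clean intersection) hypothesis or restrict to submersions, where smoothness is automatic and involutivity follows because $\Gamma(\phi^!D)$ is then locally generated by sections of the form $(X,\phi^*\beta)$ with $X$ $\phi$-related to the vector part of a local section of $D$, to which naturality of the Courant bracket applies. Since the paper only ever invokes the lemma for the surjective submersion $\pi\colon T^*Q\to Q$, this restriction costs nothing for the applications, but your proof as written cannot close part (1) in the stated generality because no proof can.
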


Given $D\subset \mathbb TQ$ a Dirac structure on $Q$, this lemma allows to consider \emph{(i)} its pull back $\pi^! D $ on $T^* Q $ through the canonical base map $ \pi\colon T^* Q \to Q$, then \emph{(ii)} consider the gauge transformation $e^\Omega \pi^! D $ of this pull-back with respect to the canonical symplectic $2$-form $\Omega$.

 \begin{Definition}
 Let $D\subset \mathbb TQ$ be a Dirac structure on $Q$. We call constrained magnetic Lagrangian system an implicit Lagrangian system for the Dirac structure $\mathbb D=e^\Omega\pi^!D\subset \mathbb TT^*Q$ as above. 
 \end{Definition}

 \begin{Theorem}
 \label{theorem-ILS}
 Let $D\subset \mathbb TQ$ be a Dirac structure and $L:TQ\to\mathbb R$ a Lagrangian. Assume that the 2-form $\omega_D\in \Gamma(\Lambda^2D^*)^{hor}$ admits a {horizontal} primitive $\theta\in \Gamma(D^*)^{hor}$. Then for $q :I\to Q$ the following are equivalent:
 \begin{enumerate}
     \item[a)] There exists a Dirac path $\zeta: I \to D$ such that $ \rho(\zeta)=\dot q$ which is the critical point among Dirac paths with the same end points of
     \begin{align} \label{eq:ILS-functional} \int_I (L (\rho(\zeta(t))) + \theta (\zeta(t)))  dt .\end{align}
      \item[b)] For all $t\in I$, the following condition holds.
      \begin{align}\label{eq:graphcondition}
          \left(\frac{\partial }{\partial t}\mathbb FL(\dot q(t)), \mathcal D_{\dot q(t)}L\right)\in \mathbb D=e^\Omega\pi^!D.
      \end{align} 
 \end{enumerate}
 
 \end{Theorem}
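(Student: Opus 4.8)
# Proof Proposal for Theorem \ref{theorem-ILS}

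The plan is to reduce this statement to Theorem \ref{thm1} via an explicit identification of the functional \eqref{eq:ILS-functional} with the Dirac functional \eqref{eq:DiracFunctionnal} applied to the Dirac structure $\mathbb D = e^\Omega\pi^!D$ on $T^*Q$, together with a natural Hamiltonian. First I would recall how the Tulczyjew differential and the fiber derivative $\mathbb FL$ interact: for $u \in T_qQ$, $\mathcal D_uL = \beta(d_uL) \in T^*_{\mathbb FL(u)}T^*Q$, and under the canonical projection $T^*T^*Q \to T^*Q$, this covers $\mathbb FL(u)$. The curve $t \mapsto \mathbb FL(\dot q(t))$ is thus the natural path on $T^*Q$ lying over $q$ through $\pi$, and condition \eqref{eq:graphcondition} says precisely that $(\tfrac{\partial}{\partial t}\mathbb FL(\dot q(t)), \mathcal D_{\dot q(t)}L) \in \mathbb D$, i.e. that $t \mapsto \mathbb FL(\dot q(t))$ is a Hamiltonian curve on $T^*Q$ for the Dirac structure $\mathbb D$ with respect to the Hamiltonian function $H$ on $T^*Q$ canonically associated to $L$ (the energy $E_L(u) = \langle \mathbb FL(u), u\rangle - L(u)$, pushed to $\leg$, extended arbitrarily — here one must be careful since $\mathbb FL$ need not be a diffeomorphism, so $H$ is only defined on $\leg$, and one invokes the remark after Theorem \ref{thm1} that all identities need only hold along the relevant leaf).

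The second step is to transport the horizontal primitive $\theta \in \Gamma(D^*)^{hor}$ of $\omega_D$ to a horizontal primitive of $\omega_{\mathbb D}$. I would use the two operations in turn. For the pullback $\pi^!D$: there is a canonical Lie algebroid morphism $\pi^!D \to D$ covering $\pi\colon T^*Q \to Q$, and pulling back $\theta$ along it gives a horizontal form $\pi^\star\theta$ on $\pi^!D$ with $d_{\pi^!D}(\pi^\star\theta) = \omega_{\pi^!D}$ (horizontality is preserved because the morphism intertwines the anchors). For the gauge transform $e^\Omega$: the map $D' \mapsto e^\Omega D'$ is an isomorphism of Lie algebroids (since $\Omega$ is closed), under which $\omega_{e^\Omega D'}$ differs from the image of $\omega_{D'}$ by the pullback of $\Omega$ to the algebroid via the anchor. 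Since $\Omega = -d\vartheta$ for the canonical Liouville $1$-form $\vartheta$ on $T^*Q$, and $\rho^*\vartheta$ is horizontal, we obtain a horizontal primitive $\theta_{\mathbb D}$ of $\omega_{\mathbb D}$ of the form (image of $\pi^\star\theta$) $\pm\, \rho^*\vartheta$. Evaluating $\theta_{\mathbb D}$ on a Dirac path $\zeta$ over the curve $t\mapsto \mathbb FL(\dot q(t))$ and matching the $\vartheta$-term with the Liouville transform identity $\langle \vartheta, \text{(tautological lift)}\rangle = \langle \mathbb FL(u), \dot q\rangle$, while the $\pi^\star\theta$-term contributes $\theta(\zeta')$ for the induced Dirac path $\zeta'$ over $q$ in $D$, should reproduce exactly the integrand $L(\rho(\zeta)) + \theta(\zeta)$ of \eqref{eq:ILS-functional} after absorbing the Hamiltonian $H = E_L$ — this is where the Legendre-transform bookkeeping $L = \langle \mathbb FL(u),u\rangle - E_L(u)$ enters.

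With these identifications in hand, applying Theorem \ref{thm1} (in the form of the equivalence (i) $\Leftrightarrow$ (iii), and using the remark that only the restriction to the leaf $\leaf$ of $\mathbb D$ containing the curve is needed) yields that the Hamiltonian-curve condition \eqref{eq:graphcondition} is equivalent to the path $t\mapsto \mathbb FL(\dot q(t))$ being critical, among Dirac paths with fixed endpoints, for the functional $\zeta \mapsto \int_I(\theta_{\mathbb D}(\zeta(t)) + H)\,dt$, which by the previous paragraph is (up to the constant-in-variation endpoint terms and the reparametrization of the Dirac path space) the functional \eqref{eq:ILS-functional}. This gives a) $\Leftrightarrow$ b).

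I expect the main obstacle to be the second step: verifying cleanly that the horizontal primitive $\theta$ of $\omega_D$ on $Q$ genuinely produces a horizontal primitive of $\omega_{\mathbb D} = \omega_{e^\Omega\pi^!D}$ on $T^*Q$, and that the resulting integrand matches \eqref{eq:ILS-functional} on the nose. The pullback step requires checking that the Lie algebroid morphism $\pi^!D\to D$ exists and is compatible with anchors and differentials (so that pullbacks of horizontal cocycles are horizontal cocycles), and the gauge step requires the precise formula for how $\omega_D$ transforms under $e^\Omega$ — namely that the correction is exactly $\rho^*\Omega$ — which is essentially the computation underlying the isomorphism of Courant/Lie algebroids. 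A secondary subtlety, flagged above, is that $\mathbb FL$ and the Tulczyjew differential land only in $\leg \subset T^*Q$ rather than on all of $T^*Q$, so the Hamiltonian $H = E_L$ is a priori only a partial function; here I would lean on the remark following Theorem \ref{thm1} that the horizontality of $\theta$, the identity $d_D\theta = \omega_D$, and the definition of $H$ are only needed along the leaf $\leaf$ of $\mathbb D$ containing the image of the curve, which $\leg$ does meet, so the argument localizes correctly.
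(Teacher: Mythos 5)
Your plan is genuinely different from the paper's proof, and it has two gaps that I do not think are merely technical. The paper does not lift the problem to a Hamiltonian system on $T^*Q$ at all: it localizes $Q$ around the curve, invokes the normal form theorem for Dirac structures ($U=S\times N$ with $D|_U=e^\eta((TS\oplus\{0\})\times\Gamma^\Pi)$ and $\Pi$ vanishing at $0$), observes that the curve stays in the leaf $S\times\{0\}$, reduces the functional to $\int_I(L^S(\dot q^S)+\theta^S(\dot q^S))\,dt$ on $S$, and applies the classical Euler--Lagrange theorem with magnetic term there, checking the $N$-component of \eqref{eq:graphcondition} by a direct coordinate computation. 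Your second step (transporting the horizontal primitive: pulling $\theta$ back along the algebroid morphism $\pi^!D\to D$ and correcting the gauge transform by $\rho^*\vartheta$ since $\Omega=-d\vartheta$) is sound in spirit and would indeed produce a horizontal primitive of $\omega_{\mathbb D}$; that is not where the trouble lies.

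The first gap is the Hamiltonian. Theorem \ref{thm1} requires $H\in C^\infty(M)$ with $M=T^*Q$ here, but for a degenerate Lagrangian the energy $E_L(u)=\langle\mathbb FL(u),u\rangle-L(u)$ need not descend through $\mathbb FL$ to a function on $\leg$, and $\leg$ need not be a submanifold on which one could even pose the problem, let alone extend $H$ smoothly to a neighborhood. The remark after Theorem \ref{thm1} relaxes the hypotheses on $\theta$ and $\omega$ to the leaf, not the existence of $H$ as a function; it cannot absorb this. The theorem as stated allows arbitrary (possibly degenerate) $L$, and the paper's own appendix and the corollary on classical magnetic terms only identify $\mathcal DL$ with $dH$ under the explicit hypothesis that $\mathbb FL$ is bijective. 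The second gap is that your reduction silently changes the class of variations. Condition a) is criticality among Dirac paths over curves in $Q$ (equivalently, among curves $q$ tangent to the foliation of $D$, since $\theta$ is horizontal), whereas Theorem \ref{thm1} applied to $\mathbb D$ gives criticality among Dirac paths over arbitrary curves in $T^*Q$ tangent to the leaf of $\mathbb D$. Curves of the form $t\mapsto\mathbb FL(\dot q(t))$ form a strictly smaller family, so criticality in the $Q$-problem does not formally imply criticality in the $T^*Q$-problem; the direction a) $\Rightarrow$ b) is exactly where this bites. In the classical nondegenerate case the equivalence of the configuration-space and phase-space variational principles holds because the extra momentum variations reproduce $p=\mathbb FL(\dot q)$ as one of the Euler--Lagrange equations, but for degenerate $L$ and singular $D$ this is the substance of the theorem, not a bookkeeping step, and your proposal does not supply an argument for it.
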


\begin{proof}
We claim that it suffices to check the equivalence on a small open set in $Q$. The second assertion is clearly local in nature and a path is a critical point for the functional \eqref{eq:ILS-functional} if and only if for any $t\in I$ there is a subinterval $t\in I_t\subset I$ on  which it is critical.\\

It suffices therefore to establish the equivalence on an open subset $U$ of $Q$, on which the Dirac structure takes the following normal form (\cite{blohmann}):

\begin{itemize}
    \item $U=S\times N$, where $S\subset \mathbb R^{a}, N\subset \mathbb R^b$ with $q(0)$ lying in $S\times \{0\}$.
    \item $D|_U=e^\eta((TS \oplus \{0\} )\times \Gamma^\Pi)$, where $\eta\in \Omega^2_{cl}(U)$ and $\Pi\in\mathfrak X^2(N)$ is a Poisson bivector field vanishing at $0$ and $\Gamma^\Pi$ is the corresponding Dirac structure on $N$.
\end{itemize}

As $S\times \{0\}$ is a leaf of $D$, $q(t)\in S\times \{0\}$ for all $t$. By our assumptions on $D$ 
$$ \int_I (L (\rho(\zeta(t))) + \theta (\zeta(t)))  dt
= \int_I (L (\dot q) + \theta^S (\dot q) ) dt,
$$
where $\theta^S\in\Omega^1(S)$ is the one-form on $S$ induced by $\theta$ and hence satisfies $d\theta=i_S^*\eta$ where $i_S:S\to U, i_S(s)=(s,0)$.

We will denote by $L^S$ the restriction of $L$ to $TS$ and write $q=(q^S, 0)$. With these conventions, the above functional reads:
$$
\int_I (L^S (\dot q^S) + \theta^S (\dot q^S))  dt.
$$

The classical Euler-Lagrange theorem {(\cite{Pontryagin})} with magnetic term implies that being a critical point of this functional is equivalent to 

\begin{align}\label{eq:graphons}
\left(\frac{\partial }{\partial t}\mathbb FL^S(\dot q^S(t)), \mathcal D_{\dot q^S(t)}L^S\right)\in e^\Omega\pi^!e^{\eta^S}(TS\oplus \{0\}).
\end{align}
Via the isomorphism $\mathbb TT^*U=\mathbb TT^*S\times \mathbb TT^*N$, the assertion \eqref{eq:graphcondition} decomposes as two conditions, the first one (on $S$) being \eqref{eq:graphons}. The second condition (on $N$) is always satisfied, as one can verify by a straightforward computation in local coordinates which relies on the fact that $\pi_N(q(t))=0$ for all $t$.

\end{proof}

\begin{Corollary}\label{cor:imp}
Let $Q,L,D$ be as in Theorem \ref{theorem-ILS} and $(X,L)$ an implicit Lagrangian system. Then any integral curve $\gamma$ of $X$ is the base path of a critical point of \eqref{eq:ILS-functional}.  
\end{Corollary}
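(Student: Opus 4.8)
The plan is to deduce the corollary directly from Theorem \ref{theorem-ILS} by unwinding the definition of an implicit Lagrangian system. Let $(X,L)$ be an implicit Lagrangian system for $\mathbb D = e^\Omega \pi^! D$, and let $\gamma$ be an integral curve of the partial vector field $X$; by definition there is a path $t\mapsto u_t\in TQ$ with $\gamma = \rho(u_t)$ (writing $\gamma$ for the base path $\pi(\mathbb FL(u_t))$) such that $\frac{\rd}{\rd t}\mathbb FL(u_t) = X_{\mathbb FL(u_t)}$. The defining property of an implicit Lagrangian system says precisely that $(X(\mathbb FL(u_t)), \mathcal D_{u_t}L)\in \mathbb D$ for all $t$; substituting the integral-curve equation, this reads $\left(\frac{\partial}{\partial t}\mathbb FL(u_t), \mathcal D_{u_t}L\right)\in \mathbb D$, which is exactly condition (b), i.e.\ \eqref{eq:graphcondition}, of Theorem \ref{theorem-ILS} (with $\dot q(t)$ there being $u_t$ here).

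First I would set up this dictionary carefully, taking care that the ``$\dot q$'' appearing in Theorem \ref{theorem-ILS} is the velocity $u_t\in TQ$ lifting the base path, and that $q = \pi\circ \mathbb FL(u_t)$ is the corresponding path in $Q$, so that $\rho$ applied to a Dirac path over $q$ recovers $\dot q = \rho(u_t)$. Then, since the hypotheses on $Q, L, D$ are those of Theorem \ref{theorem-ILS} — in particular $\omega_D$ admits a horizontal primitive $\theta$ — the theorem's equivalence (a)$\Leftrightarrow$(b) applies verbatim. Having verified (b), I invoke (a): there exists a Dirac path $\zeta\colon I\to D$ with $\rho(\zeta)=\dot q$ which is a critical point, among Dirac paths with the same endpoints, of the functional \eqref{eq:ILS-functional}. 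Since $\gamma$ is by construction the base path of this $\zeta$ (both project to $q$), this is exactly the assertion of the corollary.

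The only genuinely delicate point, and the one I would dwell on, is the identification between the two notions of ``curve'' in play: an integral curve of a partial vector field $X$ on $\leg$ is a priori a path in $TQ$ (namely $t\mapsto u_t$) whose image under $\mathbb FL$ moves according to $X$, whereas Theorem \ref{theorem-ILS} is phrased in terms of a path $q\colon I\to Q$. So I must check that $\mathbb FL(u_t)$ indeed traces out an honest path in $T^*Q$ along which $\frac{\partial}{\partial t}\mathbb FL(u_t)$ makes sense as a tangent vector (this is immediate from the integral-curve equation), and that the base projection of this path is the $\gamma$ of the statement. After that, no computation is needed — the corollary is a formal consequence. I would end with one sentence noting that, conversely, nothing here guarantees $\zeta$ is unique or that every critical $\zeta$ arises this way; the corollary is the ``easy'' direction, packaging the dynamics generated by $(X,L)$ into the variational picture.

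\begin{proof}
By definition of an implicit Lagrangian system, an integral curve of $X$ is a path $t\mapsto u_t\in TQ$ such that $\frac{\rd}{\rd t}\mathbb FL(u_t) = X_{\mathbb FL(u_t)}$ and, moreover, $(X(\mathbb FL(u_t)),\mathcal D_{u_t}L)\in \mathbb D = e^\Omega\pi^!D$ for all $t$. Denote by $q\colon I\to Q$ the base path $q(t) = \pi(\mathbb FL(u_t))$; this is the base path $\gamma$ of the statement. Combining the two displayed facts yields
\begin{equation*}
\left(\frac{\partial}{\partial t}\mathbb FL(u_t),\ \mathcal D_{u_t}L\right)\in \mathbb D = e^\Omega\pi^!D \qquad \text{for all } t\in I,
\end{equation*}
which is precisely condition (b) of Theorem \ref{theorem-ILS}, with the role of $\dot q(t)$ there played by $u_t$. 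Since $Q$, $L$, $D$ satisfy the hypotheses of that theorem, in particular $\omega_D$ admits a horizontal primitive $\theta$, condition (a) holds as well: there is a Dirac path $\zeta\colon I\to D$ with $\rho(\zeta)=\dot q$ which is a critical point, among Dirac paths with the same endpoints, of the functional \eqref{eq:ILS-functional}. As $\zeta$ lies over $q=\gamma$, this exhibits $\gamma$ as the base path of a critical point of \eqref{eq:ILS-functional}, as claimed.
\end{proof}
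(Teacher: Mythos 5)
Your argument is the intended one: the paper gives no separate proof of this corollary precisely because it is meant to follow by checking that an integral curve of an implicit Lagrangian system satisfies condition (b) of Theorem \ref{theorem-ILS} and then invoking the implication (b)$\Rightarrow$(a). Your write-up does exactly that, so the approach matches.

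The one step you yourself flag as delicate, but then settle by fiat, deserves an explicit line. Condition \eqref{eq:graphcondition} is phrased in terms of $\dot q(t)$, the velocity of a path in $Q$, whereas the integral-curve equation together with the defining property of $(X,L)$ only hands you $\bigl(\tfrac{\rd}{\rd t}\mathbb FL(u_t),\, \mathcal D_{u_t}L\bigr)\in \mathbb D$ for an a priori arbitrary path $t\mapsto u_t$ in $TQ$. Writing ``with the role of $\dot q(t)$ played by $u_t$'' presupposes the second-order condition $u_t=\dot q(t)$, where $q(t)=\pi(\mathbb FL(u_t))$ is the base path; this is not part of the definition of an integral curve. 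It is, however, a consequence of membership in $\mathbb D=e^\Omega\pi^!D$: in canonical coordinates $\mathcal D_{u}L$ has $dp$-component equal to the fiber part of $u$, while $\Omega^\flat\bigl(\tfrac{\rd}{\rd t}\mathbb FL(u_t)\bigr)$ has $dp$-component $\dot q(t)$, and membership in $e^\Omega\pi^!D$ requires their difference to be the pullback under $\pi$ of a covector on $Q$, hence to have vanishing $dp$-component. This forces $u_t=\dot q(t)$, after which your substitution into \eqref{eq:graphcondition} and the appeal to (b)$\Rightarrow$(a) of Theorem \ref{theorem-ILS} are exactly right, and $\gamma$ is indeed the base path of the resulting critical Dirac path $\zeta$ of \eqref{eq:ILS-functional}. (Minor notational point: $\rho(u_t)$ is not meaningful here, since $\rho$ is the anchor of $D\subset\mathbb TQ$ acting on Dirac paths, not on elements of $TQ$; what you mean is the bundle projection $TQ\to Q$.)
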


\begin{Example}[Classical symplectic magnetic terms]
Let $Q$ be any manifold, $\omega\in\Omega^2_{cl}(Q)$ and $D=\Gamma_\omega\subset\mathbb TQ$. Let $L:TQ\to\mathbb R$ be a Lagrangian. In this case $e^\Omega\pi^!D=\Gamma_{\Omega+\pi^*\omega}\subset \mathbb TT^*Q$.

As $\mathcal{H}_{basic}^\bullet(D)=H_{dR}^\bullet (M)$, (cf. Example \ref{sympl-ex}), the 2-form on $D$ admits a basic potential if and only if $\omega$ is de-Rham exact, i.e. $\omega=d\theta$, $\theta\in\Omega^1(Q)$.

Let us assume that the Legendre transform $\mathbb FL:TQ\to T^*Q$ is bijective, and denote the Legendre transform of the Lagrangian by $H$, i.e.

$$H(p)=\langle p, (\mathbb FL)^{-1}p\rangle - L\circ (\mathbb FL)^{-1}(p)$$

In this case $\mathcal DL$ is simply $dH$.
Theorem \ref{theorem-ILS} yields that the critical points of $L(q,\dot q)+\theta(\dot q)$ correspond under the Legendre transform to integral curves of the Hamiltonian flow of $H$ for the symplectic structure $\Omega+\pi^*\omega$. 
Corollary \ref{cor:imp} states that $(X,dH)$  is an implicit Lagrangian system with respect to $e^\Omega\pi^!D$ if and only if the vector field $X$ is the Hamiltonian vector field of $H$ with respect to $\Omega+\pi^*\omega$.

\end{Example}

\begin{Example}[Holonomic constraints as a regular foliation] \label{holonomic}

Let $F\subset TQ$ be a regular foliation. As discussed in Example \ref{reg-fol}, the Dirac structure $D=F\oplus F^\circ$ always admits a {horizontal primitive}, as the 2-form in $\Lambda^2D^*$ is zero (there is no magnetic term). Then $\pi^!D$ is the Dirac structure associated to the pullback foliation $\pi^{-1}(F)$ and 
$$e^\Omega\pi^!D=\{ (w, \alpha)  \in TT^*Q \oplus T^*T^*Q \;|\;   
  \pi_*(w) \in F, \alpha - \Omega^{\flat} w \in \pi^{-1}(F)^\circ    \}$$

Let $L:TQ\to\mathbb R$ be a Lagrangian.
Then Theorem \ref{theorem-ILS} and Corollary \ref{cor:imp} yield that the integral curves of any implicit Lagrangian system $(X,\mathcal DL)$ for $e^\Omega\pi^!D$ are critical points of $L$ among curves that are tangent to $F$. The condition 
\eqref{eq:graphcondition}
translates directly to the Euler-Lagrange equations for a system subject to holonomic constraints, which are classically spelled-out using the Lagrange multipliers \cite{lagrange}. 

\begin{Remark}

 \textbf{Holonomic and non-holonomic constraints.}
Note that the result above concerns the so-called holonomic constraints, i.e. the conditions defining the constraints do not depend essentially on the velocities of the system. Geometrically this means that the foliation $F$ comes from an integrable constraint distribution $\Delta \subset TQ$. Simple mechanical examples and counterexamples can be constructed by ``rolling without slipping'' problems: They are often formulated as an orthogonality condition on the velocity at the contact point -- the condition is integrable for the rolling disk but not for a rolling ball. Under some extra assumptions the non-holonomic constraints can still be treated in the variational approach {(\cite{YoMa}), though with no geometric interpretation. }
In this setting our result is more subtle, since as mentioned above, remarks \ref{almost-dirac} and \ref{almost-holonomic}, the non-integrable almost Dirac structures are very different from the cohomological perspective. {Formally, we cannot speak of an obstruction class, since the ``differential'' does not square to zero. 
However when some primitive can be defined, parts of theorems \ref{thm1} and \ref{theorem-ILS} are still valid.} 
\end{Remark}

\end{Example}

\subsection{Applications to numerics} \label{sec:num}

One of the motivations for the above construction is its potential application to design appropriate structure preserving numerical methods -- so called geometric integrators. 

Historically, the first example of those are the symplectic numerical methods, they are known since several decades, and are now state of the art for Hamiltonian systems (\cite{yoshida}). The key idea is that in the continuous setting the Hamiltonian flow not only preserves the level sets of the Hamiltonian function, but also leaves invariant the symplectic form. It is thus natural to mimic this property for the discrete flow, i.e. computing the trajectory numerically one wants to take the symplectic form into account. And since it is actually the same symplectic form that defines the dynamics of the system, one can reverse the argument: a flow preserving the symplectic form will ``respect'' the level sets of the Hamiltonian defining it. 

The Lagrangian counterpart of this picture is related to so-called variational integrators (\cite{MaWe}), the idea is rather natural as well. Instead of considering a continuous Lagrangian and searching for its extrema along all the paths with fixed endpoints:
\begin{equation} \label{var_pr}
  \inf \mathcal{L} \equiv  \inf \int_{0}^T L(q(t), \dot q(t)) \rd t 
\end{equation}
one defines the discrete version {$L_d$ of the integrant $L$ as follows:}
$$
  L_d (q_{n+1}, q_n, v_n) := \Delta t_n L(q_n, v_n).
$$
Here $q_n \equiv q(t_n)$, $v_n$ is some approximation of $\dot q(t_n)$ depending on $q_n$ and $q_{n+1}$; and $\Delta t_n$ are the time intervals between $q_n$ and $q_{n+1}$, not necessarily all equal. One then defines the discrete analogue of variational principle (DVP), i.e. studies the trajectories $(q_0, q_1 \dots, q_{n-1}, q_n)$  extremizing 
$$
  \mathcal{L}_d = \sum_{n=0}^N L_d, 
$$
subject to $q_0 = q(0)$ and $q_N = q(T)$. 
For conservative mechanical systems one can recover usual symplectic methods with this variational approach, and it is actually more universal, since the timestep is allowed to vary as well. 

A similar strategy can be applied whenever the variational principle can be formulated. For example, in \cite{MaWe} the case of systems with constraints is explored, which motivated some parts of this paper; {later on similar ideas were explored for continuous media problems (see e.g. \cite{gery})}
Hence, the results of Sections \ref{ILS} and \ref{magnetic} on the dynamics on Dirac structures fit to the picture perfectly: they basically say that as soon the cohomological obstruction is absent, one can formulate the Dirac dynamics with a variational approach. In Equation (\ref{var_pr}) one merely replaces the path $q(t)$ in the configuration manifold by a Dirac path $\zeta(t)$. In the continuous setting the Dirac paths preserve the Dirac structure by definition, the variational formulation permits to guarantee this property for the trajectory computed numerically.

There is however an important detail to mention: the folkloric perception of geometric integrators as ``preservation of the geometric structure guarantees preservation of physical properties'' is slightly simplified. For instance in the symplectic case, it is not the original Hamiltonian that is preserved, but its discrete version, for which one can estimate the difference \cite{CHR}. The phenomenon is even more subtle in the variational case. In fact, saying that satisfying the discrete variational principle (DVP) results in preserving some quantities of the system is no longer that straightforward. 
In the generic case the DVP will only give the relations between different variables of the system, but they will still depend on the choice of discretization or approximation of some of them. It may (and often does) also happen that the choice of the discretization to preserve the structure exactly is technically very difficult or even a priori impossible. This means that the correct statements will concern rather preservation of geometric structures up to some order of discretization step.  

{A typical example of this situation is provided by the so-called constraint algorithms: for dynamical systems, the methods to take into account the constraints expressed as algebraic conditions on dynamical variables. When it is impossible to explicitly resolve the constraints, i.e. introduce the dynamical variables satisfying them automatically, there are essentially two approaches: introduce the penalization terms with Lagrange multipliers and discretize them appropriately or ``project'' the solution to the level set of the algebraic conditions at each time step. However, to the best of our knowledge, there are very few proven theorems on how the discrete version of the system satisfies the constraints. We have tried to fill some gaps in empirical observations that one sees in literature. For example (\cite{SH-zamm}) the Dirac structure based algorithm (\cite{YoMa}) in the absence of constraints is naturally symplectic. And some partial results on how to construct \emph{pseudo-geometric} integrators preserving the conditions \emph{up to some order}  are given in \cite{Daria-prog}.}

{With the approach of the current paper we now understand why the naive attempts to increase the order of the constraint-based methods (like e.g. \cite{leok}) do not produce the desired results: roughly speaking the obtained integrators fail to be geometric/variational in the proper sense of the word. A way out would be to formulate the DVP for the calculus of variations in a more general case (\cite{Pontryagin}), and then apply it to the context of Theorems \ref{thm1} and \ref{theorem-ILS}.}

\section*{Perspectives.}

In this paper we have defined the basic Dirac cohomology, which permits {to describe an explicit and verifiable condition for} variational formulation of dynamics on Dirac structures. 

As mentioned in the last part of the paper, on top of purely mathematical interest, this construction should be useful to design more reliable tools for numerical integration of the flow of dynamical systems on Dirac structures. Those in turn naturally appear when studying constraint, interacting or dissipative mechanical systems, which are not in the range of classical Hamiltonian formalism. 
{We expect the results if this paper to provide a unified approach to those and in particular an extension of the observations from Section \ref{sec:num} to arbitrary Dirac structures.}

Let us also mention that, since Poisson manifolds provide an example of Dirac structures, this approach is useful to construct some Poisson integrators. {In the context of this paper there is no conceptual difference between the Dirac structures coming from constraint distributions or from symplectic foliations of Poisson manifolds. The constructed discretizations should thus preserve the symplectic leaves. %or at least we expect better control of their preservation.
}
This is somewhat complementary to the strategy of \cite{oscar}, where the main tool is rather Hamiltonian dynamics and symplectic groupoids. 

We suppose that these questions {are somewhat technical} and interest a more applied community than the audience of this journal, hence we intend to devote a separate paper {(\cite{discr-pont}) to the description of the discrete variational principal for the general case and the related discussion of the implementation issues.}\\

\textbf{Acknowledgments:} \\
We appreciate inspiring discussions with Dina Razafindralandy, Aziz Hamdouni,  Katarzyna Grabowska, and Pol Vanhaecke at various stages of this work. We are thankful to Tilmann Wurzbacher for valuable comments on the manuscript. \\
We thank the Erwin Schr\"odinger International Institute for Mathematics and Physics for hosting the ``Geometry for Higher Spin Gravity: Conformal Structures, PDEs, and Q-manifolds'' program, that permitted all the authors to gather in the same room and finish the manuscript. \\ 
This work has been supported by the CNRS 80Prime project ``GraNum'' and partially by PHC Procope ``GraNum 2.0''. {L.R. was supported by the RTG2491.}

%\newpage
\appendix

\section{Tulczyjew isomorphism(s)}
\label{tul-iso}
For self-containedness of this paper, we recall here the isomorphisms established by W.~Tulczyjew (\cite{tulcz, tulcz2}) between double (co)tangent bundles (at least one ``co'' should be present). The most non-trivial one is 
$$
\kappa \colon TT^*Q \to T^*TQ,
$$ the construction works as follows: start with the double vector bundle $TTQ$, \\
denote $TQ \overset{p_1}{\to} Q$ and $T(TQ) \overset{p_2}{\to} TQ$ with the respective duality pairings $<\cdot,\cdot>_1$  and $<\cdot,\cdot>_2$. There is a canonical flip $\sigma \colon TTQ \to TTQ$, then the mapping $\kappa$ is (implicitly but canonically) defined by imposing 
$$
  <\kappa(a), b>_2 = <a, \sigma(b)>_1
$$
\begin{Remark}
This flip can be seen as a sort of Schwarz Lemma: for every smooth map $\Sigma(s,t):= \mathbb R^2 \to Q $ (defined in a neighborhood of $(0,0)$):
\begin{enumerate}
    \item $t \mapsto \frac{\partial \Sigma(0,t)}{\partial s} $ is a path in $ TQ$ starting from $\frac{\partial \Sigma}{\partial s}(0,0) $. Its first jet at $0$ belongs to $T_{ \frac{\partial \Sigma}{\partial s}(0,0)}  (TQ) $.
    \item $s \mapsto \frac{\partial \Sigma(s,0)}{\partial t} $ is a path in $ TQ$ starting from $\frac{\partial \Sigma}{\partial t}(0,0) $. Its first jet at $0$ belongs to $T_{ \frac{\partial \Sigma}{\partial t}(0,0)}  (TQ) $.
\end{enumerate}
The canonical flip exchanges both. 
\end{Remark}

All the others are obtained by post- or pre-composing with 
$\Omega^{\flat} \colon TT^*Q \to T^*T^*Q$ or its inverse, where $\Omega$ is the canonical symplectic form on $T^*Q$. 

We will be mostly interested in the isomorphism
$$\beta \equiv \omega^{\flat} \circ  \kappa^{-1} \colon T^*TQ \to T^*T^*Q$$
which is a particular case of the canonical isomorphism also called Tulzcyjew isomorphism $T^*E\simeq T^*E^*$ for any vector bundle $E$ (\cite{MackenzieXu}).

\section{Legendre transformation}

Throughout this section, $E$ is a vector bundle over {$Q$} equipped with a smooth function $ L \colon U \subset E \to \mathbb R $ called \emph{Lagrangian}.

Recall that if a smooth function $f$ on an open convex subset $U \subset V$ of a vector space is strictly convex, then its differential, defined for all $v \in U$ by:
\begin{align*}  \mathbb F f \colon V & \to V^*  
\\
v & \mapsto \left(e \mapsto \left. \frac{d}{dt}\right|_{t=0} f( v + te ) \right) \end{align*} is a diffeomorphism from $ U  $ onto its image. 
If the restriction of $L$ to any fiber is strictly convex, then $\mathbb FL: U \subset E \hookrightarrow E^*$ is a diffeomorphism from $U$ to its image $U'$. \\

We define the Legendre transform $H\in C^\infty(U')$ of $L$ to be the unique function satisfying $H(\alpha)+L(v)=\langle \alpha, v\rangle$ for all $\alpha\in E^*$ with $ \alpha=\mathbb FL(v)$.

\begin{Proposition}
The Legendre transform and the Tulczyjew isomorphism are related by the equality $$\beta(d_eL)=d_{\mathbb FL(v)}H.$$
\end{Proposition}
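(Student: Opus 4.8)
The plan is to verify the identity $\beta(d_eL) = d_{\mathbb{F}L(v)}H$ in local coordinates, exploiting the fact that $\beta = \Omega^\flat \circ \kappa^{-1}$ and that both sides are canonically defined, so it suffices to check equality in any convenient chart. First I would fix coordinates $(q^i)$ on $Q$, inducing fibered coordinates $(q^i, v^i)$ on $TQ=E$, $(q^i, p_i)$ on $T^*Q = E^*$, and the associated coordinates on the double bundles $T^*TQ$ and $T^*T^*Q$. In these coordinates $d_eL$ has components $\left(q^i, v^i; \frac{\partial L}{\partial q^i}, \frac{\partial L}{\partial v^i}\right)$, and the Legendre map reads $p_i = \mathbb{F}L(v)_i = \frac{\partial L}{\partial v^i}$, while $H(q,p) = p_i v^i - L(q,v)$ with $v = v(q,p)$ the inverse Legendre map. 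Differentiating $H$ and using the defining relation $p_i = \partial L/\partial v^i$ to cancel the terms coming from the $q,p$-dependence of $v$, one gets the classical relations $\frac{\partial H}{\partial p_i} = v^i$ and $\frac{\partial H}{\partial q^i} = -\frac{\partial L}{\partial q^i}$, so $d_{\mathbb{F}L(v)}H$ has components $\left(q^i, p_i = \tfrac{\partial L}{\partial v^i};\ -\tfrac{\partial L}{\partial q^i},\ v^i\right)$.

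The remaining task is to compute $\beta(d_eL)$ explicitly in coordinates and match. For this I would first recall (or recompute from the implicit definition $\langle \kappa(a), b\rangle_2 = \langle a, \sigma(b)\rangle_1$, using the coordinate description of the canonical flip $\sigma$ on $TTQ$ which exchanges the two intermediate "$v$" and "$\dot q$" slots) the standard coordinate formula for $\kappa : TT^*Q \to T^*TQ$; equivalently I would directly use the known coordinate expression for $\beta : T^*TQ \to T^*T^*Q$. A point of $T^*TQ$ over $(q,v)$ with covector components $(a_i, b_i)$ (pairing with $\delta q^i$ and $\delta v^i$ respectively) is sent by $\beta$ to the point of $T^*T^*Q$ over $(q, p)$ with $p_i = b_i$ and covector components $(-a_i, v^i)$ — i.e. $\beta$ swaps the "velocity" slot into the base and introduces the characteristic sign in the first slot, which is exactly the coordinate shadow of $\Omega^\flat \circ \kappa^{-1}$. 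Feeding in $(a_i, b_i) = \left(\frac{\partial L}{\partial q^i}, \frac{\partial L}{\partial v^i}\right)$ yields the point over $(q, p = \partial L/\partial v)$ with covector $\left(-\frac{\partial L}{\partial q^i}, v^i\right)$, which is precisely $d_{\mathbb{F}L(v)}H$ computed above.

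The main obstacle I anticipate is purely bookkeeping: getting the coordinate formula for $\kappa$ (hence $\beta$) with the correct signs and the correct identification of which slot goes where, since the double tangent/cotangent bundles carry several competing vector bundle structures and it is easy to transpose indices or drop a sign in $\Omega^\flat$. To keep this under control I would derive the $\kappa$ formula from the flip $\sigma$ rather than quoting it, test it on the tautological example (the differential of a function pulled back appropriately), and only then specialize to $L$. Once the coordinate dictionary for $\beta$ is pinned down, the matching with $dH$ is immediate from the classical Legendre relations $\partial H/\partial p_i = v^i$, $\partial H/\partial q^i = -\partial L/\partial q^i$, and since all objects involved ($d_eL$, $\beta$, $\mathbb{F}L$, $H$, $d H$) are coordinate-independent, the identity established in one chart holds globally on $U'$.
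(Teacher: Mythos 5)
Your coordinate computation is correct: with $\kappa(q,p,\dot q,\dot p)=(q,\dot q;\dot p,p)$ derived from the flip and $\Omega=dq^i\wedge dp_i$, one indeed gets $\beta(q,v;a,b)=(q,b;-a,v)$, and feeding in $a_i=\partial L/\partial q^i$, $b_i=\partial L/\partial v^i$ reproduces $d_{\mathbb FL(v)}H$ via the classical relations $\partial H/\partial p_i=v^i$, $\partial H/\partial q^i=-\partial L/\partial q^i$. The paper states this Proposition without proof, and your argument is exactly the standard verification one would supply; the only point to nail down in a final write-up is the sign convention for $\Omega$ (and hence for $\Omega^\flat$), which you correctly identify as the one place an error could hide.
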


When $L$ is not strictly convex, the Legendre transform need not exist, however the set $\{\beta(d_eL)~|~e\in U\}$ is a Lagrangian submanifold of $T^*E^*$. It appears throughout the text as the image of $\mathcal DL$.  \\
\newpage

\bibliographystyle{unsrt}
\bibliography{DiracCoh} 

\begin{thebibliography}{10}

\bibitem{RSHD}
Dina Razafindralandy, Vladimir Salnikov, Aziz Hamdouni, and Ahmad Deeb.
\newblock Some robust integrators for large time dynamics.
\newblock {\em Adv. Model. and Simul. in Eng. Sci.}, 6(5), 2019.

\bibitem{morse}
Mar\'{\i}a Barbero Li\~{n}\'{a}n, Hern\'{a}n Cendra, Eduardo Garc\'{\i}a
  Tora\~{n}o, and David Mart\'{\i}n~de Diego.
\newblock Morse families and {D}irac systems.
\newblock {\em J. Geom. Mech.}, 11(4):487--510, 2019.

\bibitem{dirac-algebroids}
Katarzyna Grabowska and Janusz Grabowski.
\newblock Dirac algebroids in {L}agrangian and {H}amiltonian mechanics.
\newblock {\em J. Geom. Phys.}, 61(11):2233--2253, 2011.

\bibitem{GG}
Katarzyna Grabowska and Janusz Grabowski.
\newblock Variational calculus with constraints on general algebroids.
\newblock {\em J. Phys. A}, 41(17):175204, 25, 2008.

\bibitem{tulcz}
W\l odzimierz~M. Tulczyjew.
\newblock Les sous-vari\'{e}t\'{e}s lagrangiennes et la dynamique
  hamiltonienne.
\newblock {\em C. R. Acad. Sci. Paris S\'{e}r. A-B}, 283(1):Ai, A15--A18, 1976.

\bibitem{tulcz2}
W\l odzimierz M.~{Tulczyjew}.
\newblock {Les sous-vari\'et\'es lagrangiennes et la dynamique lagrangienne}.
\newblock {\em {C. R. Acad. Sci., Paris, S\'er. A}}, 283:675--678, 1976.

\bibitem{Mackenzie}
K.~{Mackenzie}.
\newblock {\em {Lie groupoids and Lie algebroids in differential geometry}},
  volume 124.
\newblock Cambridge University Press, Cambridge. London Mathematical Society,
  London, 1987.

\bibitem{vaintrob}
A.~Yu. Va\u{\i}ntrob.
\newblock Lie algebroids and homological vector fields.
\newblock {\em Uspekhi Mat. Nauk}, 52(2(314)):161--162, 1997.

\bibitem{lich}
Andre {Lichnerowicz}.
\newblock {Les vari\'et\'es de Poisson et leurs alg\`ebres de Lie associees}.
\newblock {\em {J. Differ. Geom.}}, 12:253--300, 1977.

\bibitem{Hermann}
R.~{Hermann}.
\newblock {On the accessibility problem in control theory}.
\newblock {Int. Symp. Non-Linear Differ. Equations and Non-Linear Mech.,
  325-332 (1963).}, 1963.

\bibitem{sylvain}
Sylvain Lavau.
\newblock A short guide through integration theorems of generalized
  distributions.
\newblock {\em Differential Geom. Appl.}, 61:42--58, 2018.

\bibitem{ginzburg}
Viktor~L. Ginzburg.
\newblock Equivariant {P}oisson cohomology and a spectral sequence associated
  with a moment map.
\newblock {\em Internat. J. Math.}, 10(8):977--1010, 1999.

\bibitem{zucchini}
Roberto Zucchini.
\newblock The gauging of {BV} algebras.
\newblock {\em J. Geom. Phys.}, 60(11):1860--1880, 2010.

\bibitem{zbMATH00004959}
Theodore~James {Courant}.
\newblock {Dirac manifolds}.
\newblock {\em {Trans. Am. Math. Soc.}}, 319(2):631--661, 1990.

\bibitem{burs}
Henrique Bursztyn.
\newblock A brief introduction to {D}irac manifolds.
\newblock In {\em Geometric and topological methods for quantum field theory},
  pages 4--38. Cambridge Univ. Press, Cambridge, 2013.

\bibitem{dSw}
Ana Cannas~da Silva and Alan Weinstein.
\newblock {\em Geometric models for noncommutative algebras}, volume~10 of {\em
  Berkeley Mathematics Lecture Notes}.
\newblock American Mathematical Society, Providence, RI; Berkeley Center for
  Pure and Applied Mathematics, Berkeley, CA, 1999.

\bibitem{zbMATH06054532}
Camille {Laurent-Gengoux}, Anne {Pichereau}, and Pol {Vanhaecke}.
\newblock {\em {Poisson structures}}, volume 347.
\newblock Berlin: Springer, 2012.

\bibitem{arnold}
V.~I. Arnold.
\newblock {\em Mathematical methods of classical mechanics}, volume~60 of {\em
  Graduate Texts in Mathematics}.
\newblock Springer-Verlag, New York, second edition, 1989.
\newblock Translated from the Russian by K. Vogtmann and A. Weinstein.

\bibitem{blohmann}
Christian Blohmann.
\newblock Removable presymplectic singularities and the local splitting of
  {D}irac structures.
\newblock {\em Int. Math. Res. Not. IMRN}, (23):7344--7374, 2017.

\bibitem{Pontryagin}
L.S. Pontrygin, V.G. Boltyanskii, R.V. Gamkrelidze, and E.F. Mischenko.
\newblock {\em Mathematical Theory of Optimal Processes}.
\newblock M. Nauka, 1983.

\bibitem{lagrange}
Joseph~Louis Lagrange.
\newblock {\em Mécanique Analytique}.
\newblock Mallet-Bachelier, 1855.

\bibitem{YoMa}
Hiroaki Yoshimura and Jerrold~E. Marsden.
\newblock Dirac structures in {L}agrangian mechanics. {I}. {I}mplicit
  {L}agrangian systems.
\newblock {\em J. Geom. Phys.}, 57(1):133--156, 2006.

\bibitem{yoshida}
Haruo Yoshida.
\newblock Construction of higher order symplectic integrators.
\newblock {\em Phys. Lett. A}, 150(5-7):262--268, 1990.

\bibitem{MaWe}
J.~E. Marsden and M.~West.
\newblock Discrete mechanics and variational integrators.
\newblock {\em Acta Numer.}, 10:357--514, 2001.

\bibitem{gery}
Xiaodan Cao, Abdelbacet Oueslati, An~Danh Nguyen, and G\'{e}ry de~Saxc\'{e}.
\newblock Numerical simulation of elastoplastic problems by
  {B}rezis-{E}keland-{N}ayroles non-incremental variational principle.
\newblock {\em Comput. Mech.}, 65(4):1005--1018, 2020.

\bibitem{CHR}
Dina Razafindralandy, Aziz Hamdouni, and Marx Chhay.
\newblock {A review of some geometric integrators}.
\newblock {\em {Advanced Modeling and Simulation in Engineering Sciences}},
  5(1):16, December 2018.

\bibitem{SH-zamm}
Vladimir Salnikov and Aziz Hamdouni.
\newblock From modelling of systems with constraints to generalized geometry
  and back to numerics.
\newblock {\em ZAMM Z. Angew. Math. Mech.}, 99(6):e201800218, 13, 2019.

\bibitem{Daria-prog}
Daria Loziienko, Aziz Hamdouni, and Vladimir Salnikov.
\newblock {Construction of pseudo-geometric integrators}.
\newblock {\em {Program Comput Soft}}, 2, 2022.

\bibitem{leok}
Melvin Leok and Tomoki Ohsawa.
\newblock Discrete {D}irac structures and implicit discrete {L}agrangian and
  {H}amiltonian systems.
\newblock In {\em X{VIII} {I}nternational {F}all {W}orkshop on {G}eometry and
  {P}hysics}, volume 1260 of {\em AIP Conf. Proc.}, pages 91--102. Amer. Inst.
  Phys., Melville, NY, 2010.

\bibitem{oscar}
Oscar Cosserat.
\newblock {Symplectic groupoids for Poisson integrators}.
\newblock {\em {Preprint arXiv:2205.04838}}, 2022.

\bibitem{discr-pont}
Aziz Hamdouni, Alexei Kotov, Camille Laurent-Gengoux, and Vladimir Salnikov.
\newblock Discrete pontryagin's maximum principle and applications.
\newblock {\em in prepration}, 2022.

\bibitem{MackenzieXu}
Kirill C.~H. Mackenzie and Ping Xu.
\newblock Lie bialgebroids and {Poisson} groupoids.
\newblock {\em Duke Math. J.}, 73(2):415--452, 1994.

\end{thebibliography}

%\;\\

\end{document}